\author{Stefano Decio, Eugenia Malinnikova}
\address{S.D.: School of Mathematics, University of Minnesota, Minneapolis, MN, USA}
\email{decio001@umn.edu}
\address{E.M.: Department of Mathematics, Stanford University, Stanford, CA, USA; Department of Mathematical Sciences, Norwegian University of Science and Technology, Trondheim, Norway}
\email{eugeniam@stanford.edu}
\newcommand{\R}{{\mathbf R}}
\newcommand{\ld}{{\lambda}}
\newcommand{\defeq}{\vcentcolon=}
\newtheorem{theorem}{Theorem}
\newtheorem{lemma}{Lemma}
\newtheorem{claim}{Claim}
\newtheorem{proposition}{Proposition}
\newtheorem{thmx}{Theorem}
\theoremstyle{remark}
\newtheorem*{remark}{Remark}
\theoremstyle{definition}
\DeclareMathOperator{\diver}{div}
\begin{document}

\begin{abstract}
    Let $\varphi_{\ld}$ be an eigenfunction of the Laplace-Beltrami operator on a smooth compact Riemannian manifold $(M,g)$, i.e.,  $\Delta_g \varphi_{\ld} + \ld \varphi_{\ld}=0$. We show that $\varphi_{\ld}$ satisfies a local Bernstein inequality; namely for any geodesic ball $B_g(x,r)$ in $M$ there holds: $\sup_{B_g(x,r)}|\nabla\varphi_{\ld}|\leq C_{\delta}\max\left\{\frac{\sqrt{\ld}\log^{2+\delta}\ld}{r},\ld\log^{2+\delta}\ld\right\}\sup_{B_g(x,r)}|\varphi_{\ld}|$. We also prove analogous inequalities for solutions of elliptic PDEs in terms of the frequency function. 
\end{abstract}

\title{On a Bernstein inequality for eigenfunctions}
\maketitle

\section{Introduction}
An eigenfunction of the Laplace-Beltrami operator on a compact smooth Riemannian manifold $(M,g)$ is a solution of the equation 
\begin{align}
\label{eigen}
    \Delta_g\varphi_{\ld}+\ld\varphi_{\ld}=0. 
\end{align}
A pervasive theme in the study of the local properties of eigenfunctions is that $\varphi_{\ld}$ should behave like a polynomial of degree approximately $\sqrt{\ld}$.  This idea informs the influential conjecture of Yau on the size of the nodal set of eigenfunctions and the works of Donnelly and Fefferman (\cite{DF},\cite{DF2}). In this note we pursue the analogy and obtain fine estimates on the growth of eigenfunctions. In particular, we seek to obtain a version of the classical Bernstein inequality; for a trigonometric polynomial $T_N$ of degree $N$, the inequality reads
\begin{align*}
    \sup_{\theta\in (-\pi,\pi)}|T'_N(\theta)|\leq N \sup_{\theta\in (-\pi,\pi)}|T_N(\theta)|.
\end{align*}
For an algebraic polynomial in $P_N$, also of degree $N$, the following inequality, usually called the Markov or Bernstein-Markov inequality, holds:
\begin{align*}
    \sup_{x\in (-1,1)}|P'_N(x)|\leq N^2 \sup_{x\in (-1,1)}|P_N(x)|.
\end{align*}
Note that in general one needs $N^2$ in the Markov inequality; the Chebyshev polynomials are extremizers of the inequality. Essentially, the Bernstein inequality for trigonometric polynomials is a global inequality, while the Markov inequality is local in nature. For eigenfunctions, the global Bernstein inequality
\begin{align}
    \label{global}
    \sup_{M}|\nabla \varphi_{\ld}|\leq C_M\sqrt{\ld} \sup_{M}|\varphi_{\ld}|
\end{align}
holds and is an easy consequence of standard elliptic estimates for PDEs; see for instance \cite[Corollary 3.3]{OP} for a proof. Let us remark that a global Bernstein inequality also holds for linear combinations of eigenfunctions (\cite{FM}), but the proof is considerably more involved. 

A local version of the Bernstein inequality is substantially more difficult to prove; the study was initiated by Donnelly and Fefferman in \cite{DF2}, where they prove that
\begin{equation}
\label{eq:DFL2}
\int_{B(x,(1+1/\sqrt{\ld})r)}|\varphi_\ld|^2\lesssim \int_{B(x,r)}|\varphi_\ld|^2,
\end{equation}
and deduce the following result
\begin{thmx}
\label{df}
Let $(M,g)$ be a smooth compact Riemannian manifold of dimension $d$ and let $\varphi_{\ld}$ be a solution of \eqref{eigen}. Then 
\begin{align*}
    \sup_{B(x,r)}|\nabla\varphi_{\ld}|\leq C(M,g)\frac{\ld^{\frac{d+2}{2}}}{r}\sup_{B(x,r)}|\varphi_{\ld}|
\end{align*}
for any geodesic ball $B(x,r)$.
\end{thmx}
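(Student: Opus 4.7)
The strategy is to combine the $L^2$-doubling inequality~\eqref{eq:DFL2} with standard interior elliptic regularity applied at the natural length scale $1/\sqrt{\ld}$ of an eigenfunction. At this scale the equation $\Delta_g\varphi_\ld+\ld\varphi_\ld=0$ rescales to a uniformly elliptic equation with bounded zeroth-order term, so classical Schauder or Moser estimates apply with constants independent of $\ld$. The Donnelly--Fefferman inequality~\eqref{eq:DFL2} then transports the resulting $L^2$ control from a slight enlargement of $B(x,r)$ back to $B(x,r)$, and a trivial $L^2$-vs-$L^\infty$ comparison closes the loop at the price of a volume factor $r^{d/2}$.

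\textbf{Details.} Fix $y\in B_g(x,r)$ at which we want to bound $|\nabla\varphi_\ld(y)|$, and set $s\defeq r/(2\sqrt{\ld})$, so that $B_g(y,2s)\subset B_g(x,(1+1/\sqrt{\ld})r)$. In a normal coordinate chart around $y$ define the rescaled function $\widetilde\varphi(z)\defeq\varphi_\ld(\exp_y(sz))$ for $z\in B(0,2)\subset\R^d$; it satisfies an equation of the form $\widetilde L\widetilde\varphi+(s^2\ld)\widetilde\varphi=0$, where $\widetilde L$ is uniformly elliptic with smooth coefficients controlled by the geometry of $(M,g)$, and where $s^2\ld=r^2/4$ remains bounded provided $r$ is less than, say, the injectivity radius. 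Interior elliptic regularity then yields
\begin{align*}
|\nabla\widetilde\varphi(0)|\lesssim\|\widetilde\varphi\|_{L^2(B(0,2))}
\end{align*}
with constant independent of $\ld$, which after undoing the scaling becomes
\begin{align*}
|\nabla\varphi_\ld(y)|\lesssim s^{-1-d/2}\,\|\varphi_\ld\|_{L^2(B_g(y,2s))}.
\end{align*}
Applying~\eqref{eq:DFL2} to the inclusion $B_g(y,2s)\subset B_g(x,(1+1/\sqrt{\ld})r)$ gives $\|\varphi_\ld\|_{L^2(B_g(y,2s))}\lesssim \|\varphi_\ld\|_{L^2(B_g(x,r))}\lesssim r^{d/2}\sup_{B_g(x,r)}|\varphi_\ld|$. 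Substituting $s=r/(2\sqrt{\ld})$ and collecting powers produces
\begin{align*}
|\nabla\varphi_\ld(y)|\lesssim \frac{\ld^{(d+2)/4}}{r}\sup_{B_g(x,r)}|\varphi_\ld|,
\end{align*}
which is in fact stronger than the stated $\ld^{(d+2)/2}/r$ bound; taking the supremum in $y$ completes the argument.

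\textbf{Remaining issues and main obstacle.} The regime of large $r$ (where the uniform rescaling breaks down) is handled by a standard covering argument together with the global Bernstein inequality~\eqref{global}. The real difficulty of the theorem is not in the elliptic bookkeeping above but in the inequality~\eqref{eq:DFL2} itself, whose proof in~\cite{DF2} hinges on Carleman-type estimates and frequency-function arguments and is the substantive contribution of Donnelly and Fefferman. Granting~\eqref{eq:DFL2} with a constant independent of $\ld$, the passage to a $C^1$ Bernstein estimate proceeds essentially as sketched, and the exponent $(d+2)/2$ above is a by-product of the crude $L^2\to L^\infty$ step --- it is precisely this kind of loss that the present paper's main theorem seeks to eliminate down to a polylogarithmic factor.
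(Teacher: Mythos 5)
Your argument is correct and is precisely the deduction the paper attributes to Donnelly and Fefferman: the substantive input is the doubling inequality \eqref{eq:DFL2}, and the passage to the gradient bound is standard interior elliptic regularity applied at the wavelength scale $s\sim r/\sqrt{\ld}$, followed by the crude $L^2$-to-$L^\infty$ comparison. Your arithmetic in fact yields the sharper exponent $\ld^{(d+2)/4}$, which matches the exponent in Donnelly and Fefferman's original statement; the $(d+2)/2$ appearing in the displayed Theorem~\ref{df} looks like a typographical slip in this survey of the literature, and the closing line of your proposal should accordingly refer to $(d+2)/4$ rather than $(d+2)/2$. One further remark: there is no separate ``large-$r$'' regime to handle, since the rescaled chart has radius $2s = r/\sqrt{\ld}$, which remains below the injectivity radius for every $r\leq \mathrm{diam}(M)$ once $\ld$ is large, and the finitely many small eigenvalues are trivial; the covering argument and appeal to \eqref{global} you mention at the end are therefore superfluous.
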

They also conjecture that the inequality should hold with exponent $1/2$ instead of $(d+2)/2$, i.e. that 
\begin{align*}
    \sup_{B(x,r)}|\nabla\varphi_{\ld}|\leq C(M,g)\frac{\sqrt{\ld}}{r}\sup_{B(x,r)}|\varphi_{\ld}|
\end{align*}
We note that this, if true, would give an inequality that is stronger than the Markov inequality for polynomials, where the square of the degree of the polynomial appears; imprecisely but suggestively, the conjecturally sharp inequality would tell us that an eigenfunction $\varphi_{\ld}$ looks like a \emph{harmonic} polynomial of degree $\sqrt{\ld}$. In \cite {Do} Dong obtained a sharper version of Theorem \ref{df} for eigenfunctions on surfaces, using powerful ideas introduced in \cite{Do2}, see Section \ref{s2d} for the details.

The main result of the present article is an improvement of Theorem \ref{df} in any dimension, which, up to logarithmic errors, gives the conjecture of Donnelly and Fefferman at scales up to the wavelength $\frac{1}{\sqrt{\lambda}}$, and resembles more the Markov inequality at larger scales.

\begin{theorem}
\label{main}
Let $(M,g)$ be a compact smooth Riemannian manifold, and let $\varphi_{\ld}$ be a solution of $\Delta_g \varphi_{\ld} + \ld \varphi_{\ld}=0$. Let $B_g(x,r)\subset M$ be any geodesic ball. Let $\delta>0$ be arbitrary. Then the following Bernstein-type inequality holds:
\begin{align}
\label{bernstein}
    \sup_{B_g(x,r)}|\nabla\varphi_{\ld}|\leq C(M,g,\delta)\max\left\{\frac{\sqrt{\ld}\log^{2+\delta}\ld}{r},\ld\log^{2+\delta}\ld\right\}\sup_{B_g(x,r)}|\varphi_{\ld}|
\end{align}
\end{theorem}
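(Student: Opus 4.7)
The plan is to reduce \eqref{bernstein} to a Bernstein-type inequality for harmonic functions in terms of their doubling index, and then to apply that inequality after lifting $\varphi_\ld$ one dimension higher. Concretely, I would set $u(x,t)\defeq\varphi_\ld(x)\cosh(\sqrt\ld\,t)$ on the product $M\times\R$. Because $\Delta_M\varphi_\ld=-\ld\varphi_\ld$ and $\partial_t^2\cosh(\sqrt\ld\,t)=\ld\cosh(\sqrt\ld\,t)$, the function $u$ is harmonic for the product metric; moreover $u(\cdot,0)=\varphi_\ld$ and $\nabla_x u(\cdot,0)=\nabla\varphi_\ld$, so a pointwise gradient bound for $u$ restricts to a pointwise bound for $\nabla\varphi_\ld$.

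The main technical step is to establish the ``Bernstein inequality for solutions of elliptic PDE in terms of the frequency function'' announced in the abstract. In the harmonic case it would read: if $v$ is harmonic on $B_R$ in a chart where the metric is close to Euclidean and the doubling exponent of $v$ on $B_R$ is at most $N$, then
\begin{equation*}
\sup_{B_{R/2}}|\nabla v|\;\leq\; C\,\frac{N\,(\log N)^{1+\delta}}{R}\,\sup_{B_R}|v|.
\end{equation*}
I would prove this by expanding $v(\rho\theta)=\sum_k\rho^k Y_k(\theta)$ into spherical harmonics around the center: the doubling hypothesis, combined with an Almgren-type frequency monotonicity, should force the high-frequency tail $\sum_{k>CN(\log N)^{1+\delta}}\rho^k Y_k$ to be geometrically small in $L^\infty(B_{R/2})$. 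Classical harmonic-polynomial Bernstein, together with the spherical-harmonic Bernstein $\|\nabla Y_k\|_\infty\lesssim k\|Y_k\|_\infty$, then handles the truncated polynomial part, and the tail is absorbed by a trivial gradient estimate. The passage from the constant-coefficient Euclidean model to a genuine Riemannian chart can be handled by freezing the metric at the center and absorbing the perturbation through three-ball or Carleman arguments.

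To assemble the eigenfunction Bernstein, I would apply the harmonic inequality to $u$ on a ball of radius $\rho$ in $M\times\R$ centered at $(x_0,0)$, with $\rho$ to be chosen. The doubling exponent of $u$ there should be $\lesssim\sqrt\ld\,(1+\rho)$: the $\sqrt\ld$ contribution comes from~\eqref{eq:DFL2} (iterated and converted to $L^\infty$ via Moser) and the $\sqrt\ld\,\rho$ contribution from the explicit growth of $\cosh(\sqrt\ld\,t)$. To keep $\cosh(\sqrt\ld\,\rho)$ under control when transferring a sup of $u$ back to a sup of $\varphi_\ld$, $\rho$ must be taken $\lesssim 1/\sqrt\ld$; and to keep the sup doubling of $\varphi_\ld$ bounded when relating $\sup_{B_M(x_0,r+\rho)}|\varphi_\ld|$ to $\sup_{B_M(x_0,r)}|\varphi_\ld|$, $\rho$ must be taken $\lesssim r/\sqrt\ld$. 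Balancing these constraints separately in the regimes $r\leq 1/\sqrt\ld$ and $r\geq 1/\sqrt\ld$ and then restricting to $\{t=0\}$ should yield the bounds $\sqrt\ld/r$ and $\ld$ respectively, whose maximum is the expression in~\eqref{bernstein}, up to the polylog factor.

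The hardest point is the near-linear-in-$N$ Bernstein for harmonic functions. The classical Caccioppoli + Moser route only gives a polynomial-in-$N$ factor, because the $L^2$-to-$L^\infty$ conversion is done at a single scale, and extracting an $N\,(\log N)^{1+\delta}$ dependence requires a genuinely quantitative use of the spherical-harmonic / frequency decomposition as sketched above. The final exponent $2+\delta$ in the theorem should then split as $1+\delta$ lost in the harmonic Bernstein plus one additional logarithmic factor from converting~\eqref{eq:DFL2} from an $L^2$ to an $L^\infty$ doubling for the lifted function.
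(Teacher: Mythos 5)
Your overall architecture matches the paper's: lift $\varphi_\ld$ to a function on $M\times\R$ that is harmonic for the product metric (whether $\cosh(\sqrt\ld t)$ or $e^{\sqrt\ld t}$ is immaterial), prove a Bernstein inequality in terms of the frequency/doubling index for solutions of elliptic equations, and then cover large balls by wavelength-scale balls to pass from $\sqrt\ld/r$ to $\ld$. The gap is in your proof of the elliptic Bernstein lemma itself. Your spherical-harmonic route is exactly what the paper uses in its Appendix to prove the \emph{sharp, log-free} inequality $\|\nabla h\|_{L^p(B_r)}\lesssim (N/r)\|h\|_{L^p(B_r)}$ for genuinely Euclidean-harmonic $h$ with frequency $\le N$; so your claim that one ``loses $1+\delta$ logarithms in the harmonic Bernstein'' is already off. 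But the real problem is that the lifted function $u$ is \emph{not} Euclidean-harmonic: it solves $\diver(A\nabla u)=0$ with $A$ close to, but not equal to, the identity, and spherical harmonics do not apply. Your plan to ``freeze the metric at the center and absorb the perturbation by three-ball or Carleman arguments'' does not close at the wavelength scale. Concretely, on a ball of radius $r$ the coefficient error after zeroth-order freezing is $\sim r^2$, while controlling the $L^\infty$ difference between $u$ and the frozen solution via Green's function costs a factor $\sim N^{d/2+1}$ (this is Claim \ref{maxest}); making $r^2 N^{d/2+1}$ small forces $r\lesssim N^{-(d+2)/4}$, which is far below the wavelength $1/N$ for $d\ge 3$.

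What the paper actually does, and what is missing from your sketch, is twofold. First, it approximates $A$ not by the identity but by a higher-order Taylor polynomial $\widetilde A$; the regularity condition $k\eta>\tfrac d2+1$ in Theorem \ref{ellipticmain} is precisely calibrated so that the resulting error $r^k N^{d/2+1}$ is small at scale $r\lesssim N^{-\eta}$. This yields \emph{analytic} but non-constant coefficients, where spherical harmonics are no longer available. Second, for the analytic-coefficient model $v$ it replaces spherical harmonics with a genuinely complex-analytic argument (Lemma \ref{analytic}): extend $v$ holomorphically into a wedge of opening $\pi-2\alpha$ in a complex one-dimensional slice, and majorize $\log|F|$ by a harmonic comparison function $M\,\mathrm{Re}\bigl((z-x_1)/(1-x_1)\bigr)^\beta$ with $\beta=\tfrac12-\widetilde\delta$. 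This Phragm\'en--Lindel\"of-type step is the source of the $\log^{2+\delta}\ld$: one needs $M\sim\log N$ on the outer boundary and $M(N/L)^\beta$ bounded at the evaluation point, which is exactly what forces $L=N\log^{2+\delta}N$ once $\beta<\tfrac12$. Your proposal does not contain this mechanism, misattributes the logs (the $L^2$-to-$L^\infty$ conversion costs a power of $N$, not a log), and the variable-coefficient step is not a minor technicality that ``three-ball or Carleman arguments'' can absorb — it is the heart of the proof.
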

\begin{remark}
    \textit{(i)} The constant $C(M,g,\delta)$ blows up as $\delta$ goes to $0$. If the reader wishes to forget about $\delta$, it is certainly possible to think that $\delta=1$ and discard the $\delta$-dependence of the constants. \\
    \textit{(ii)} In dimension two, using some geometric techniques of Dong, we can get the sharp inequality up to scale $(\log\ld)^{-1}$; see Theorem \ref{2dmain} in Section \ref{s2d} for the precise statement. \\
    \textit{(iii)} We also obtain an $L^{\infty}$ version of \eqref{eq:DFL2}, which is effectively equivalent to the Bernstein inequality.
\end{remark}

Our proof of Theorem \ref{main} hinges on a version of the Bernstein inequality for solutions of second order elliptic PDEs with smooth enough coefficients. The role of (square root of) the eigenvalue is played in this case by the frequency function (or the doubling index), which serves as a local degree of the solution. We believe the following result is of independent interest, since it shows that a solution to an elliptic PDE with bounded frequency also behaves like a polynomial. By $N_u(0,2)$ in the statement below we denote the frequency function of $u$ in the ball $B(0,2)$, see \eqref{frequency} at the beginning of Section \ref{sell} for the definition.
\begin{theorem}
\label{ellipticmain}
let $A(x)=(a_{ij}(x))$ be a uniformly positive definite matrix satisfying
\begin{enumerate}[(i)]
    \item  $\Lambda^{-1}|\xi|^2 \leq A(x)\xi\cdot\xi \leq \Lambda|\xi|^2$ for any $\xi\in \R^d$, with $0<\Lambda<\infty$;
    \item $A(0)=Id$.
\end{enumerate} 
Let $u\in W^{1,2}(B(0,2))$ be a solution of $\diver(A\nabla u)=0$ in $B(0,2)$, and assume $N_u(0,2)\leq N$. Suppose that $N>10$ and let $L\vcentcolon=N\log^{2+\delta} N$, with $\delta>0$ arbitrarily small. Let $0<\eta\leq 1$ be fixed. Then, if for $k$ satisfying
\begin{align}
    \label{regularity}
    k\eta>\frac{d}{2}+1
\end{align}
$a^{ij}(x)$ is a $\mathcal{C}^{k}$ function of $x$ with 
\begin{align*}
    \|a^{ij}(x)\|_{\mathcal{C}^k}\leq \gamma \text{ for all } i,j,
\end{align*} 
the following inequalities hold for any $r<a/N^{\eta}$, where $a$ is a small constant depending on $\Lambda$ and $\gamma$:
\begin{align}
    \label{aux1}
    \sup_{B(0,(1+1/L)r)}|u|\leq C \sup_{B(0,r)}|u|;\\
    \label{aux2}
    \sup_{B(0,r)}|\nabla u| \leq \frac{CL}{r}\sup_{B(0,r)}|u|.
\end{align}
The constant $C$ above depends on $\Lambda,\gamma$ and $\delta$ but not on $N$.
\end{theorem}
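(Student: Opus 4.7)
The plan is to establish the $L^{\infty}$ doubling estimate \eqref{aux1} first and then deduce the gradient bound \eqref{aux2} from it. Once \eqref{aux1} is known, for any $x\in B(0,r)$ the ball $B(x,r/L)$ is contained in $B(0,r(1+1/L))$, so a standard interior Cauchy-type gradient estimate for $\diver(A\nabla u)=0$ on this smaller ball gives
$$|\nabla u(x)|\leq \frac{CL}{r}\sup_{B(x,r/L)}|u|\leq \frac{CL}{r}\sup_{B(0,r(1+1/L))}|u|\leq \frac{C'L}{r}\sup_{B(0,r)}|u|,$$
which is \eqref{aux2}. So the real work is in \eqref{aux1}.

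To prove \eqref{aux1}, I would interpolate between two classical estimates. The first is the Hadamard three-balls inequality for solutions of $\diver(A\nabla u)=0$: writing $M(\rho) = \sup_{B(0,\rho)}|u|$, for $0<\rho_1<\rho_2<\rho_3\leq 2$ one has
$$M(\rho_2)\leq M(\rho_1)^{\alpha}M(\rho_3)^{1-\alpha},\qquad \alpha = \frac{\log(\rho_3/\rho_2)}{\log(\rho_3/\rho_1)},$$
with constants depending only on $\Lambda$. The second is the rough doubling implied by the frequency hypothesis: at scales $r<a/N^{\eta}$ one has
$$M(2r)\leq K\,M(r),\qquad K=C\,2^{cN},$$
for some $c=c(\Lambda,d)$. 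This follows from the (approximate) monotonicity of the frequency function, which yields $L^2$-doubling of $u$ on concentric spheres, promoted to $L^\infty$-doubling via Caccioppoli and Moser estimates. Taking $\rho_1=r$, $\rho_2=r(1+1/L)$, $\rho_3=2r$ in the three-balls inequality gives $1-\alpha\asymp 1/(L\log 2)$, so
$$M(r(1+1/L))\leq M(r)\left(\frac{M(2r)}{M(r)}\right)^{1/(L\log 2)}\leq M(r)\cdot K^{1/(L\log 2)}.$$
Since $L=N\log^{2+\delta}N$ we have $cN/(L\log 2)=O(\log^{-2-\delta}N)$, hence $K^{1/(L\log 2)}\leq C'$ uniformly in $N$, which yields \eqref{aux1}. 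The point of the logarithmic loss in $L$ over the naive choice $L=N$ is exactly to make the exponent $N/L$ shrink to zero, so that the $2^{cN}$ in the doubling bound is tamed by the three-balls interpolation.

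\textbf{Main obstacle.} The technical core lies in the variable-coefficient frequency analysis behind $M(2r)\leq C\,2^{cN}M(r)$. For the Laplacian, monotonicity of the frequency is classical, but for $\diver(A\nabla u)$ it is only \emph{approximately} monotone: the derivative of $N_u$ picks up error terms involving derivatives of $A$, which must be integrated away over dyadic scales between $r$ and $2r$. The scale restriction $r<a/N^{\eta}$ combined with the regularity assumption $k\eta>d/2+1$ is precisely what is needed to guarantee that the accumulated error stays much smaller than $N$, preserving the clean doubling constant $2^{cN}$; the Sobolev threshold $d/2+1$ arises when one converts $L^2$-type bounds on derivatives of the coefficients into the pointwise control required inside the frequency identity. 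Making this quantitative, with constants depending explicitly on $\Lambda$, $\gamma$, and $\delta$ and compatible with the logarithmic gain in $L$, is the demanding part of the argument; the three-balls interpolation and the gradient estimate that follow it are then essentially soft.
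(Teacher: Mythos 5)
Your derivation of \eqref{aux2} from \eqref{aux1} is correct and is exactly what the paper does. The gap is in the proof of \eqref{aux1}. You invoke an $L^{\infty}$ three-balls inequality
\[
M(\rho_2)\le M(\rho_1)^{\alpha}M(\rho_3)^{1-\alpha}, \qquad \alpha=\frac{\log(\rho_3/\rho_2)}{\log(\rho_3/\rho_1)},
\]
``with constants depending only on $\Lambda$,'' but such a sharp supremum-norm three-balls inequality is not available for solutions of $\diver(A\nabla u)=0$ --- indeed not even for harmonic functions in $\R^d$, because $\log|u|$ is \emph{not} subharmonic when $u$ is harmonic (only $|u|$ is; $\Delta\log|u|=-|\nabla u|^2/u^2\le 0$). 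What one does have from frequency-function monotonicity is the $L^2$ version $\tilde H(\rho_2)\le C\,\tilde H(\rho_1)^{\alpha}\tilde H(\rho_3)^{1-\alpha}$, where $\tilde H(\rho)=\strokedint_{\partial B_\rho}|u|^2$. Converting between $\tilde H$ and $M$ (via local elliptic estimates) costs a polynomial factor: if $N_u\le N$, then $\tilde H(\rho)\le M(\rho)^2\lesssim N^d\,\tilde H(\rho)$. Plugging this into the $L^2$ three-balls gives only $M(\rho_2)\lesssim N^{d/2}M(\rho_1)K^{1-\alpha}$, and the uncontrolled $N^{d/2}$ prefactor destroys the conclusion: the whole point of $L=N\log^{2+\delta}N$ is to get a constant \emph{independent of} $N$ in \eqref{aux1}, and the three-balls route leaves a factor of $N^{d/2}$ that you have no mechanism to absorb.

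This is precisely the difficulty the paper's proof is designed to overcome. Instead of a direct three-balls argument, the paper freezes the coefficients to their Taylor polynomial $\widetilde A$ of order $k$, solves the Dirichlet problem $\diver(\widetilde A\nabla v)=0$ with boundary data $u|_{\partial B(0,r)}$, and estimates $u-v$ by a Green's-function argument using the smallness $r^kN^{d/2+1}\lesssim 1$ (this is where the hypothesis $k\eta>d/2+1$ and the scale restriction $r<aN^{-\eta}$ enter, not where you place them). Since $\widetilde A$ is real-analytic, $v$ extends to a holomorphic function $F$ in a complex neighborhood of the ball, and $\log|F|$ \emph{is} subharmonic. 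The two-constants/harmonic-measure argument is then run for $\log|F|$ in a wedge-shaped planar slice $\widetilde\Omega$ of the complex domain. Because the analyticity radius is only comparable to $\mathrm{dist}(\cdot,\partial B)$, $\widetilde\Omega$ has a corner, so the relevant harmonic-measure exponent is some $\beta<1/2$ rather than $1$; the bound obtained is of the form $\log|F(x_0)|\lesssim (\log N)\cdot(N/L)^\beta$. Choosing $L=N\log^{2+\delta}N$ makes this product bounded provided $\beta>1/(2+\delta)$, which is achievable with $\beta=1/2-\tilde\delta$. This is also why the paper needs $\log^{2+\delta}$ and not merely $\log^{1+\delta}$ or $\log$: with a linear harmonic measure $\sim 1/L$, the $N^{d/2}$ factor (equivalently, the $\log N$ term) would be absorbed by any $L\gg N$, but the corner geometry degrades the exponent to $\beta<1/2$, and the exponent $2+\delta$ in $L$ is exactly what compensates. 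Your heuristic that the scale restriction and $C^k$ regularity serve to control the drift of the frequency is not where they are actually used; they serve to make the coefficient-freezing error negligible compared to the solution.

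In short: the three-balls + doubling plan is a natural first idea, but it cannot give a constant independent of $N$ in \eqref{aux1} because the pointwise-to-$L^2$ conversion introduces a polynomial loss, and you have not supplied a mechanism to absorb it. The holomorphic extension is the key idea in the paper that replaces the unavailable $L^\infty$ log-convexity, and the specific logarithmic power in $L$ is dictated by the harmonic-measure exponent in the resulting wedge, not by the three-balls exponent.
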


\begin{remark}
    For the application of Theorem \ref{ellipticmain} to eigenfunctions we will only need $\eta=1$. But it is worth noting that for the equations considered here we do not need to take such small radii. 
\end{remark}

\subsection*{Plan of the paper}
Section \ref{sell} is devoted to the proof of Theorem \ref{ellipticmain}, which uses an auxiliary result for solutions of equations with analytic coefficient. Theorem \ref{main} is deduced from Theorem \ref{ellipticmain} in Section \ref{seigen}. In Section \ref{s2d} we give a proof of the aforementioned Theorem \ref{2dmain}, containing a sharper version of the Bernstein inequality in dimension two. In the Appendix we prove sharp $L^p$ Bernstein inequalities for harmonic functions in $\R^d$.

\subsection*{Notation} The dependence of the constants on the dimension is pervasive throughout the paper and will more often than not be suppressed. We indicate by $c,C$ constants whose value may change from line to line (and even within the same line) but only depends on the manifold $(M,g)$ or on the parameters $\Lambda,\gamma$. Sometimes we do not indicate the constants explicitly and use the symbols $\gtrsim,\lesssim,\sim$ with the same meaning. 

\section{Elliptic PDEs}
\label{sell}
Donnelly and Fefferman in \cite{DF2} prove Theorem \ref{df} directly for eigenfunctions using Carleman inequalities. In this work we take a different route: we view an eigenfunction at scales below the wavelength as a solution to a uniformly elliptic PDE through a well known lifting trick (see Section \ref{seigen}), and to prove the elliptic result we rely on the monotonicity of the frequency function and some 1-dimensional complex analysis. Let us now describe in detail the proof.

We let $A(x)=(a_{ij}(x))$ be as in the statement of Theorem \ref{ellipticmain}, meaning that 
\begin{enumerate}[(i)]
    \item  $\Lambda^{-1}|\xi|^2 \leq A(x)\xi\cdot\xi \leq \Lambda|\xi|^2$ for any $\xi\in \R^d$.
    \item  for some $k$ such that $k\eta>\frac{d}{2}+1$, $\|a^{ij}(x)\|_{\mathcal{C}^k}\leq \gamma$ for all $i,j$.
\end{enumerate}
We remind the reader that $0<\eta\leq 1$ is the parameter for which we seek to obtain inequalities \eqref{aux1} and \eqref{aux2} for $r\lesssim N^{-\eta}$. Let now 
\begin{align*}
    \mu(x)=\frac{A(x)x\cdot x}{|x|^2}
\end{align*}
and note that $\Lambda^{-1}\leq \mu(x) \leq \Lambda$. Following \cite{GL1} (see also \cite{LM1}), for a solution $u$ of $\diver(A\nabla u)=0$ in $B(0,2)\subset \R^d$, $u\in W^{1,2}(B(0,2))$, with $A(0)=Id$ and a ball $B(x,r)\subset B(0,2)$, we define the frequency function as
\begin{align}
\label{frequency}
    N_u(x,r)=\frac{r\int_{B(x,r)}A\nabla u\cdot\nabla u}{2\int_{\partial B(x,r)}\mu|u|^2}.
\end{align}
Recall that the key assumption of Theorem \ref{ellipticmain} is that $N_u(0,2)\leq N$.

Throughout the section, we let $B\vcentcolon=B(0,1)$. For the proof of Theorem \ref{ellipticmain} we will need an auxiliary result about solutions to elliptic PDEs with real-analytic coefficients; we will approximate $u$ with such functions. We state and prove the result in the next subsection.
\subsection{A lemma on analytic solutions}
We let here $\widetilde{A}$ be an elliptic matrix with entries $\widetilde{a}^{ij}$ real-analytic in $B$, and ellipticity constant $\Lambda$. More precisely, we require the existence of numbers $K,A$ such that 
\begin{align}
  \label{derivatives}  \max_{i,j}|\nabla^p \widetilde{a_{ij}}(x)|\leq Kp!A^p \text{ for all } p\geq0.
\end{align}
\begin{lemma}
\label{analytic}
    Consider a solution $v$ of the equation $\diver(\widetilde{A}\nabla v)=0$ in $B$, with $\widetilde{A}(0)$ satisfying \eqref{derivatives} and $\widetilde{A}(0)=Id$, and suppose that there is a number $N>10$ such that 
    \begin{enumerate}[(a)]
        \item $N_v(B)\leq N$;
        \item $\sup_B|v|\leq N^{\kappa}\|v\|_{L^2(\partial B)}$, with $\kappa\geq0$ a dimensional constant.
    \end{enumerate}
    Let $L\vcentcolon=N\log^{2+\delta}N$, with $\delta>0$ a real number. Then there is a constant $c=c(\Lambda,K,A,\kappa,\delta)$ such that 
    \begin{align}
        \sup_{(1-1/L)(1-1/N)B}|v|\geq c\sup_{(1-1/N)B}|v|.
    \end{align}
    \begin{proof}
        It is a classical result that a solution to an elliptic equation with coefficients satisfying \eqref{derivatives} extends to a holomorphic function in a domain in $\mathbf{C}^d$. It follows from the proof of Theorem 5.7.1' in \cite{Mo} that for every $y\in B$, there is a complex ball in $\mathbf{C}^d$ of radius equal to $c(\Lambda,K,A)dist(y,\partial B)$ such that $v$ extends uniquely to a holomorphic function $F$ in that ball, with the supremum of $F$ in the complex ball bounded by a multiple of the supremum of $v$ in the real ball. Call $\Omega\subset\mathbf{C}^d$ the domain obtained by the union of all these balls, and call $F$ the function holomorphic in $\Omega$ that extends $v$. By condition \textit{(b)} we have that 
        \begin{align*}
            \sup_{\Omega}|F|\leq CN^{\kappa}\|v\|_{L^2(\partial B)}.
        \end{align*}
        Let $m=\sup_{(1-1/L)(1-1/N)B}|v|$, with $L$ as in the statement of the Lemma. A consequence of almost monotonicity of the frequency and the bound \textit{(a)} is that $\int_{B(0,r_2)}|v|^2\lesssim (r_2/r_1)^{C(1+N)}\int_{B(0,r_1)}|v|^2$ for $r_1<r_2\leq 1$ (see \cite{HL}, Theorem 3.1.3). Then, since $L>N$, 
        \begin{align*}
            m\geq \sup_{(1-5/N)B}|v|\gtrsim \left(\int_{\partial((1-5/N)B)}|v|^2\right)^{\frac{1}{2}}\gtrsim \|v\|_{L^2(\partial B)}.
        \end{align*}
        In view of this, we normalize $v$ so that $C_0m=1$, where $C_0$ is the constant such that $\sup_{(1-1/L)(1-1/N)\Omega}|F|\leq C_0m$. We abuse notation and still denote by $v,F$ the normalized functions. We have then $\|v\|_{L^2(\partial B)}\leq C$, $\sup_{\Omega}|F|\leq CN^{\kappa}$, $\sup_{(1-1/L)(1-1/N)\Omega}|F|\leq 1$ and we need to prove that 
        \begin{align}
        \label{middle}
            \sup_{(1-1/N)B}|v|\leq C. 
        \end{align}
        Let $x_0\in \overline{(1-1/N)B}$ be such that $|v(x_0)|=\sup_{(1-1/N)B}|v|$. We take the line joining the origin and $x_0$ and consider the plane given by the complexification of it. Let $\Omega_0$ be the intersection of $\Omega$ with this plane. Up to an isometry we can assume that $x_0\in \mathbf{R}$ and $\Omega_0\subset\mathbf{C}$. Let $x_1\vcentcolon=(1-1/L)(1-1/N)\in \mathbf{R}$. The domain $\Omega_0$ contains the domain $R$ bounded by the contour $\Gamma=\Gamma_1\cup\Gamma_2\cup\Gamma_3\cup\Gamma_4$ where
        \begin{align*}
            &\Gamma_1=\{z=x+iy\in\mathbf{C} : x\geq 0, y\geq 0, z-1=\rho e^{i(\pi-\alpha)}\};\\
            &\Gamma_2=\{z=x+iy\in\mathbf{C} : x\geq 0, y\geq 0, z+1=\rho e^{i\alpha}\};\\
            &\Gamma_3=\{z=x+iy\in\mathbf{C} : x\leq 0, y\leq 0, z+1=\rho e^{-i\alpha}\};\\
            &\Gamma_4=\{z=x+iy\in\mathbf{C} : x\geq 0, y\leq 0, z-1=\rho e^{i(\alpha-\pi)}\}.
        \end{align*}
        for some $0<\alpha<\pi/2$. Note that $\alpha$ depends on $\Lambda,K,A$ only. Let us define $f\vcentcolon=F_{|_{\Omega_0}}$; we have that $\sup_{\Omega_0}|f|\leq CN^{\kappa}$ and $\sup_{(1-1/L)(1-1/N)\Omega_0}|f|\leq 1$. Finally, we set $h(z)\vcentcolon=\log|f(z)|$. Note that $h$ is a subharmonic function. Consider now the domain 
        \[\widetilde{\Omega}\vcentcolon=R\setminus (1-1/L)(1-1/N)R\cap B(x_1,1-x_1)\] and note that $x_0\in\widetilde{\Omega}.$ The boundary of $\widetilde{\Omega}$ consists of $\Gamma_-\cup\Gamma_+\cup\Gamma_0$,
        where
        \begin{align*}
            &\Gamma_-=\{
            x_1+\rho e^{i(\pi-\alpha)}, 0\le \rho\le 1-x_1\}\cup\{
            x_1+\rho e^{i(\alpha-\pi)}, 0\le\rho\le 1-x_1\};\\
            &\Gamma_+=(\Gamma_1\cup\Gamma_4)\cap B(x_1,1-x_1);\\
            &\Gamma_0=\partial B(x_1,1-x_1)\cap \left(R\setminus (1-1/L)(1-1/N)R\right).
        \end{align*}
        Below is a picture of the regions described: $\widetilde{\Omega}$ is the region bounded by the red curve.\\
        
        \includegraphics[width=320pt]{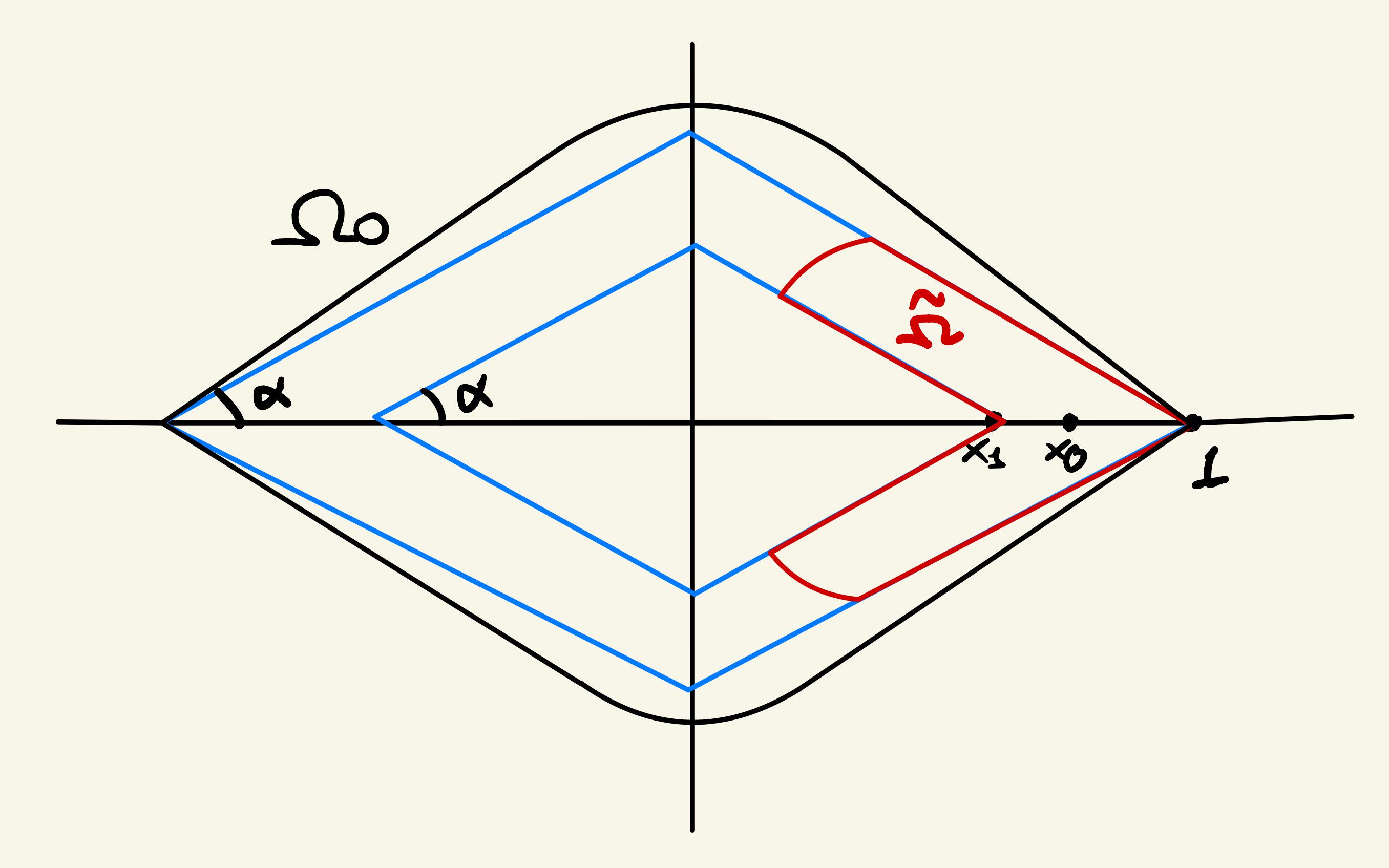} \\
        We introduce the harmonic function 
        \begin{align*}
            \omega(z)\vcentcolon=M\text{Re}\left(\left(\frac{z-x_1}{1-x_1}\right)^{\beta}\right),
        \end{align*}
        defined in $\mathbf{C}\setminus(-\infty, x_1)$, where $M>0$ and $\beta>0$ are real numbers to be chosen later. We want to use the maximum principle and bound $h$ by $\omega$; to this end, we estimate $\omega$ on $\partial\widetilde{\Omega}$. Note that on $\Gamma_-\cap\{y\geq 0\}$, $(z-x_1)^{\beta}=\rho^{\beta}e^{i\beta(\pi-\alpha)}$ and hence for any $\beta$ such that 
        \begin{align*}
            \beta(\pi-\alpha)<\frac{\pi}{2}
        \end{align*}
        we have $\text{Re}(z-x_1)^{\beta}\geq 0$ and $\omega(z)\geq 0$. We fix a choice for $\beta$: 
        \begin{align}
        \label{betafix}
            \beta=\frac{1}{2}-\widetilde{\delta}
        \end{align}
        with $\widetilde{\delta}$ a small positive real number to be chosen later. Note that since $\alpha>0$, $\beta$ chosen in this way is consistent with the condition above. This choice of $\beta$ guarantees that also on $\Gamma_-\cap\{y\leq 0\}$ we have $\omega(z)\geq 0$. Note that since $\sup_{(1-1/L)(1-1/N)\Omega_0}|f|\leq 1$, we have that $h\leq 0$ on $\Gamma_-$, so that $h(z)\leq\omega(z)$ for $z\in\Gamma_-$. 
        On $\Gamma_+$ we have 
        \[\min_{z\in\Gamma_+}\text{Re}\left(\left(\frac{z-x_1}{1-x_1}\right)^{\beta}\right)=\min_{0\le\theta\le\pi-2\alpha}\left(\frac{\sin \alpha}{\sin(\alpha+\theta)}\right)^\beta\cos\beta\theta\ge (\sin\alpha)^{\beta+1}\ge C_\alpha.\]
        Since $h(z)\leq \widetilde{C}+\kappa\log N$ on $\Gamma_+$, at the very least we require $MC_\alpha\geq \widetilde{C}+\kappa\log N$. 
        Finally, note that on $\Gamma_0$, $|z-x_1|=1-x_1$, and we write $z-x_1=(1-x_1)e^{i\theta}$. On $\Gamma_0\cap \{y\geq 0\}$, $\theta\in(\frac{\pi-\alpha}{2},\pi-\alpha)$ and by \eqref{betafix}
        \begin{align*}
            \text{Re}\left(\left(\frac{z-x_1}{1-x_1}\right)^{\beta}\right)=\cos(\beta\theta)\gtrsim \widetilde{\delta}.
        \end{align*}
        The same estimate holds on $\Gamma_0\cap \{y\leq 0\}$, hence $\omega(z)\geq M\widetilde{\delta}$ on $\Gamma_0$. 
        We choose $M$ such that 
        \begin{align}
        \label{M}
            M\widetilde{\delta}=C_1+C_2\kappa\log N
        \end{align}
        with $C_1$ and $C_2$ chosen in such a way that $h(z)\leq \omega(z)$ 
        on $\partial\widetilde{\Omega}$, so that $h(z)\leq\omega(z)$ on $\widetilde{\Omega}$ by the maximum principle. In particular,
        \begin{align*}
            h(x_0)\leq \omega(x_0)=M\text{Re}\left(\left(\frac{x_0-x_1}{1-x_1}\right)^{\beta}\right)=M\left(\frac{(1-1/N)1/L}{1-(1-1/N)(1-1/L)}\right)^{\beta}\\
            \leq M\left(\frac{N}{L}\right)^{\beta}=\left(\frac{C_1}{\widetilde{\delta}}+\frac{C_2\kappa\log N}{\widetilde{\delta}}\right)\left(\frac{1}{\log^{2+\delta}N}\right)^{\frac{1}{2}-\widetilde{\delta}}.
        \end{align*}
        Let us pick $\widetilde{\delta}=\min\{\delta/10,C_\alpha)$; then $h(x_0)\leq C_3/\delta$, and finally 
        \begin{align*}
            v(x_0)=f(x_0)=\exp(h(x_0))\leq \exp\left(\frac{C_3}{\delta}\right),
        \end{align*}
        which implies \eqref{middle} and concludes the proof.
    \end{proof}
\end{lemma}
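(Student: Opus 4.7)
The plan is to exploit the real-analyticity of the coefficients $\widetilde{a}^{ij}$, which makes $v$ real-analytic, and reduce to a one-dimensional complex analysis problem. By Morrey's regularity theorem for elliptic equations with analytic coefficients, $v$ extends to a holomorphic function $F$ on a complex neighborhood $\Omega \subset \mathbf{C}^d$ of $B$ whose thickness at each real point is comparable to its distance to $\partial B$, with $\sup_\Omega |F| \lesssim \sup_B |v|$. Combining with hypothesis (b) gives $\sup_\Omega |F| \lesssim N^\kappa \|v\|_{L^2(\partial B)}$, and the almost-monotonicity of the frequency function (a standard consequence of $N_v(B) \leq N$, as in Han-Lin) allows one to bound $\|v\|_{L^2(\partial B)}$ from above by $\sup_{(1-1/L)(1-1/N)B}|v|$ up to factors that grow only polynomially in $N$. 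After normalizing so that this latter quantity is of order $1$, it suffices to show $\sup_{(1-1/N)B} |v| \lesssim 1$.

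Pick $x_0 \in \overline{(1-1/N)B}$ attaining the supremum of $|v|$ on $(1-1/N)B$, and restrict $F$ to the complex line through $0$ and $x_0$ to obtain a holomorphic function $f$ on a planar domain $\Omega_0$ which contains the real segment $[-1,1]$. The function $h := \log|f|$ is subharmonic; by the normalization it satisfies $h \leq 0$ on the shrunken subdomain $(1-1/L)(1-1/N)\Omega_0$ and $h \leq \kappa \log N + O(1)$ on the remainder of $\Omega_0$. The point $x_0$ lies just outside the shrunken subdomain, at distance of order $1/L$ from it along the real axis, and the goal becomes to show $h(x_0) = O(1)$.

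This is a classical two-constants / Phragmén-Lindelöf setup, to be handled by constructing an explicit harmonic majorant of $h$ on the annular region between the nested subdomains. A natural candidate, modeled on the square-root behavior of harmonic measure near a smooth boundary point of a planar domain, is $\omega(z) = M\,\text{Re}\bigl((z-x_1)/(1-x_1)\bigr)^{\beta}$ with $\beta$ slightly below $1/2$ and $x_1$ placed on the real axis near the ``corner'' where the inner and outer boundaries meet. One takes $M \sim \kappa \log N$ so that $\omega$ dominates $h$ on the outer boundary, and one needs $\beta < 1/2$ so that the phase of $(z-x_1)^\beta$ stays within $(-\pi/2,\pi/2)$ on the inner boundary and hence $\omega \geq 0$ there. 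The maximum principle then yields $h(x_0) \lesssim \omega(x_0) \sim (\log N)(N/L)^{\beta}$. The main obstacle is this geometric construction together with the precise choice of $\beta$: one needs the factor $(N/L)^{\beta} = (\log^{2+\delta} N)^{-\beta}$ to beat the $\log N$ on the outer boundary, which works exactly because $L$ carries a $\log^{2+\delta}$ rather than $\log^{2}$ power and $\beta$ can be chosen sufficiently close to $1/2$ from below (the closeness depending on $\delta$, which explains the $\delta$-dependence of the final constant).
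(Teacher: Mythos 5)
Your proposal matches the paper's proof essentially step by step: Morrey's analytic-hypoellipticity to extend $v$ holomorphically, normalization via the doubling/frequency bound, restriction to a complex line through $x_0$, and a two-constants estimate for the subharmonic function $\log|f|$ using the harmonic majorant $M\,\mathrm{Re}\bigl((z-x_1)/(1-x_1)\bigr)^\beta$ with $\beta$ just below $1/2$. One detail you gloss over: because $\omega$ has a branch cut along $(-\infty,x_1]$, the comparison domain must be cut off by the circle $\partial B(x_1,1-x_1)$, and on that arc $\omega$ is only bounded below by $M\cos\beta\theta\gtrsim M\widetilde{\delta}$ (where $\beta=\tfrac12-\widetilde{\delta}$), so $M$ must be taken of size $(\log N)/\widetilde{\delta}$ rather than just $\log N$; since $\widetilde{\delta}$ is a fixed constant depending only on $\delta$, this does not change the conclusion, but it is the reason the final constant $c(\delta)$ degenerates as $\delta\to 0$.
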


\subsection{Proof of Theorem \ref{ellipticmain}}
Note first that \eqref{aux1} immediately implies \eqref{aux2} by elementary arguments (see for instance the proof of Lemma \ref{poly} in the Appendix, or the beginning of the proof of Proposition \ref{small} in Section \ref{seigen}), so we will prove \eqref{aux1} only. We start with 
\begin{claim}
\label{maxest}
    If $N_u(0,2)\leq N$, then 
    \begin{enumerate}[(i)]
    \item  $\max_{\overline{B}}|u|\lesssim N^{\frac{d}{2}}\left(\strokedint_{\partial B}|u|^2\right)^{\frac{1}{2}}$;
    \item $\max_{\overline{B}}|\nabla u|\lesssim N^{\frac{d}{2}+1}\left(\strokedint_{\partial B}|u|^2\right)^{\frac{1}{2}}$.
    \end{enumerate}
\begin{proof}
    We remind the reader that $B$ denotes the unit ball in $\R^d$. Let $x\in\overline{B}$ be such that $u(x)=\max_{\overline{B}}|u|$. By classical local boundedness properties of solutions to elliptic PDEs (see for instance \cite{GT}, Theorem 8.17),
    \begin{align*}
        |u(x)|^2\lesssim \strokedint_{B\left(x,\frac{1}{N}\right)}|u|^2 \lesssim N^d\int_{B\left(0,1+\frac{1}{N}\right)}|u|^2.
    \end{align*}
    We recall the inequality $\int_{B(0,r_2)}|u|^2\lesssim (r_2/r_1)^{C(1+N)}\int_{B(0,r_1)}|u|^2$ which was already used in the previous subsection. Applying it with $r_1=1$, $r_2=1+1/N$ in the above estimate, we obtain 
    \begin{align*}
        |u(x)|^2\lesssim N^d\left(1+\frac{1}{N}\right)^{CN}\int_{B}|u|^2,
    \end{align*}
    which gives \textit{(i)} in the claim. For \textit{(ii)}, the proof is the same but starting with the estimate $|\nabla u(x)|^2\lesssim N^2\strokedint_{B\left(x,\frac{1}{N}\right)}|u|^2$ instead.
\end{proof}
\end{claim}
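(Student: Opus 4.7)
The plan is to combine two standard ingredients: an interior $L^\infty$ bound from elliptic regularity applied on a small ball of radius $1/N$ around the maximum point, and the doubling-type inequality $\int_{B(0,r_2)}|u|^2\lesssim (r_2/r_1)^{C(1+N)}\int_{B(0,r_1)}|u|^2$ implied by the hypothesis $N_u(0,2)\le N$, which was already invoked in the proof of Lemma \ref{analytic}.

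For part (i), let $x_0\in\overline{B}$ realize $|u(x_0)|=\max_{\overline{B}}|u|$, and apply a standard local boundedness estimate for solutions of uniformly elliptic equations (e.g.\ \cite[Theorem~8.17]{GT}) on the ball $B(x_0,1/N)\subset B(0,1+1/N)\subset B(0,2)$. This yields
\[|u(x_0)|^2\lesssim \strokedint_{B(x_0,1/N)}|u|^2 \lesssim N^d\int_{B(0,1+1/N)}|u|^2.\]
Feeding in the doubling inequality with $r_1=1$ and $r_2=1+1/N$ bounds the outer integral by $\int_B |u|^2$ up to a universal constant, since $(1+1/N)^{C(1+N)}=O(1)$. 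To pass from $\int_B|u|^2$ to the boundary integral, I would again use frequency monotonicity: writing $\int_B|u|^2=\int_0^1 H(r)\,dr$ with $H(r)=\int_{\partial B(0,r)}\mu|u|^2$, the bound $N_u(0,r)\le CN$ forces $H(r)\lesssim r^{2N+d-1}H(1)$, so integrating in $r$ gives $\int_B|u|^2\lesssim H(1)/N \lesssim \strokedint_{\partial B}|u|^2$. Combining these estimates yields (i), with a bit of slack in the power of $N$.

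Part (ii) is entirely analogous, the only change being that we invoke the interior gradient version of the $L^\infty$ bound, $|\nabla u(x_0)|^2\lesssim N^2\strokedint_{B(x_0,1/N)}|u|^2$, obtained by rescaling on the same ball. The extra factor $N^2$ shifts the exponent from $N^{d/2}$ to $N^{d/2+1}$, which matches the claimed bound.

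The only delicate point is verifying that the coefficient regularity $\|a^{ij}\|_{\mathcal{C}^k}\le\gamma$ on $B(0,2)$ translates into constants in the interior estimates that are independent of $N$ after rescaling to the ball $B(x_0,1/N)$. Since we only rely on local boundedness (requiring $L^\infty$ coefficients) and interior gradient bounds (which need $\mathcal{C}^{0,\alpha}$ coefficients), the hypothesis $k\eta>d/2+1$ is comfortably sufficient; this rescaling bookkeeping is the only source of friction, and it is routine.
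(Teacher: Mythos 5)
Your argument follows the paper's proof almost step for step: local boundedness on a ball of radius $1/N$ around the maximum point, then the doubling inequality from $N_u(0,2)\le N$, and finally a bound of $\int_B|u|^2$ by $\strokedint_{\partial B}|u|^2$ (which the paper leaves implicit). Parts (ii) is handled the same way.

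There is, however, a genuine error in the auxiliary step. You claim that $N_u(0,r)\le CN$ forces $H(r)\lesssim r^{2N+d-1}H(1)$ and thence $\int_B|u|^2\lesssim H(1)/N$. This is backwards: the identity $\frac{d}{dr}\log\bigl(H(r)/r^{d-1}\bigr)=2N_u(0,r)/r$ (exact for harmonic $u$, holding up to bounded corrections in the elliptic case) shows that an \emph{upper} bound on the frequency gives the \emph{lower} bound $H(r)\gtrsim r^{CN+d-1}H(1)$ for $r<1$, not an upper bound. Your intermediate inequality and the resulting $\int_B|u|^2\lesssim H(1)/N$ are false in general — for $u\equiv 1$ one has $N_u=0\le N$, $\int_B|u|^2=|B|$ and $H(1)\sim|\partial B|$, so the claimed bound fails as $N\to\infty$. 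What you actually need is only $\int_B|u|^2\lesssim\strokedint_{\partial B}|u|^2$, and this follows from the \emph{non-negativity} of the frequency (equivalently, near-monotonicity of $r\mapsto H(r)/r^{d-1}$): $H(r)\lesssim r^{d-1}H(1)\le CH(1)$ for $r\le 1$, hence $\int_0^1 H(r)\,dr\lesssim H(1)\sim\strokedint_{\partial B}|u|^2$. With this correction the rest of the argument is correct and coincides with the paper's.
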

Let now $r<a/N^{\eta}$ as in the statement of Theorem \ref{ellipticmain}, and define $f\vcentcolon=u_{|_{\partial B(0,r)}}$. Let $\widetilde{A}$ be the Taylor approximation of order $k$ of $A$ (this exists by our hypothesis on the regularity of $A$). We rescale everything to $B= B(0,1)$, denoting with a subscript $r$ the rescaled quantities. Note that $u_r(x)=u(rx)$ satisfies the equation $\diver(A_r\nabla u_r)=0$ in $B$, where $A_r$ satisfies the ellipticity estimates (i) with the same constants as $A$, and 
\begin{align}
\label{coeffest}
    \sup_{x\in B}\|A_r(x)-\widetilde{A}_r(x)\|\leq c(\gamma,d)r^k. 
\end{align}
Note that the entries of $\widetilde{A}_r(x)$ satisfy condition \eqref{derivatives} for some $K,A$ depending on $\gamma$. Let now $v$ be the solution of 
\begin{align}
    \begin{cases}
    \diver(\widetilde{A}_r\nabla v)=0 \qquad \: &\text{in} \ B,\\
    v=f_r &\text{on} \ \partial B. \end{cases}
\end{align}
We also set $h\vcentcolon=u_r-v$. We have that 
\begin{align*}
    \diver(\widetilde{A}_r\nabla h)=\diver((\widetilde{A}_r-A_r)\nabla u_r),
\end{align*}
and clearly $h_{|_{\partial B}}=0$. Let $G(x,y)$ denote Green's function for the operator $\diver(\widetilde{A}_r\nabla)$ in $B$; by the results in \cite{GW}, for any $x\in B$, $\nabla_y G(x,y)$ is absolutely integrable in $B$ with a bound on the $L^1$ norm depending on $\Lambda$ only. We can then estimate, for $x\in B$,
\begin{align*}
    |h(x)|=\left|\int_{B}\diver(\widetilde{A}_r\nabla h)(y)G(x,y)dy\right|=\left|\int_{B}\diver((\widetilde{A}_r-A_r)\nabla u_r)G(x,y)dy\right|\\
    =\left|\int_B (\widetilde{A}_r-A_r)\nabla u_r(y)\cdot \nabla_y G(x,y)dy\right|\leq C(\Lambda)\sup_B\|\widetilde{A}_r-A_r\|\sup_B|\nabla u_r|.
\end{align*}
We now apply \eqref{coeffest} and \textit{(ii)} in Claim \ref{maxest} on $u_r$ (noting that $N_{u_r}(B)=N_{u}(0,r)\leq CN$ by almost monotonicity of the frequency), obtaining
\begin{align*}
    |h(x)|\leq C(\Lambda,\gamma)r^k N^{\frac{d}{2}+1}\|f_r\|_{L^2(\partial B)}.
\end{align*}
Condition \eqref{regularity} tells us that $k\eta\geq d/2+1$, and remembering that $r<a/N^{\eta}$ we get 
\begin{align}
\label{aux}
    |h(x)|\leq \varepsilon \|f_r\|_{L^2(\partial B)},
\end{align}
with $\varepsilon$ a small constant, provided we choose $a$ small enough depending on $\Lambda, \gamma$.
\begin{claim}
\label{vestimates}
With $v$ as above, we have:
    \begin{enumerate}[(i)]
    \item $N_v(B)\leq CN$;
    \item  $\max_{\overline{B}}|v|\lesssim N^{\frac{d}{2}}\left(\int_{\partial B}|v|^2\right)^{\frac{1}{2}}$.
    \end{enumerate}
    \begin{proof}
        Note that we have $\int_B \widetilde{A}_r\nabla v\cdot\nabla h=0$, hence
        \begin{align*}
            \int_B \widetilde{A}_r\nabla v\cdot\nabla v=\int_B \widetilde{A}_r\nabla v\cdot\nabla u_r
        \end{align*}
        and we can estimate 
        \begin{align*}
            \int_B \widetilde{A}_r\nabla v\cdot\nabla v\lesssim \left(\int_B|\widetilde{A}_r\nabla v|^2\right)^{\frac{1}{2}}\left(\int_B|\nabla u_r|^2\right)^{\frac{1}{2}}\\
            \lesssim \left(\int_B\widetilde{A}_r\nabla v\cdot\nabla v\right)^{\frac{1}{2}}\left(\int_B A_r\nabla u_r\cdot\nabla u_r\right)^{\frac{1}{2}}
        \end{align*}
        so that 
        \begin{align*}
            \int_B \widetilde{A}_r\nabla v\cdot\nabla v\lesssim \int_B A_r\nabla u_r\cdot\nabla u_r,
        \end{align*}
        with implied constants depending on $\Lambda, \gamma$. Since on $\partial B$, $v=u_r=f_r$, for the frequency we have 
        \begin{align*}
            N_v(B)=\frac{\int_B\widetilde{A}_r\nabla v\cdot\nabla v}{2\int_{\partial B}\mu_{\widetilde{A}_r}|v|^2}\lesssim \frac{\int_B A_r\nabla u_r\cdot\nabla u_r}{\int_{\partial B}\mu_{A_r}|u_r|^2}\\ \lesssim N_{u_r}(B)=N_{u}(0,r)\lesssim N,
        \end{align*}
        which settles \textit{(i)}. In the last inequality we used almost monotonicity of the frequency function (\cite{GL1},\cite{GL2}). Part \textit{(ii)} follows immediately by the maximum principle and Claim \ref{maxest}.
    \end{proof}
\end{claim}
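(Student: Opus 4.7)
\textbf{Proof proposal for Claim \ref{vestimates}.}

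My plan is to treat the two parts separately, with (i) being the substantive one and (ii) being a direct consequence of what has already been established. For part (i), the key observation is that $v$ is the energy minimizer of the Dirichlet form associated to $\widetilde{A}_r$ among functions with boundary values $f_r$ on $\partial B$. Indeed, $v$ solves $\mathrm{div}(\widetilde{A}_r \nabla v)=0$ with $v=f_r=u_r$ on $\partial B$, so writing $h=u_r-v$ with $h|_{\partial B}=0$ and testing the equation for $v$ against $h$ yields
\begin{align*}
    \int_B \widetilde{A}_r \nabla v \cdot \nabla h = 0.
\end{align*}
Expanding $\int_B \widetilde{A}_r \nabla u_r \cdot \nabla u_r = \int_B \widetilde{A}_r \nabla v \cdot \nabla v + \int_B \widetilde{A}_r \nabla h \cdot \nabla h$ (using symmetry of $\widetilde{A}_r$), and dropping the nonnegative last term, we get
\begin{align*}
    \int_B \widetilde{A}_r \nabla v \cdot \nabla v \leq \int_B \widetilde{A}_r \nabla u_r \cdot \nabla u_r.
\end{align*}

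Next I would upgrade this to an estimate in terms of the $A_r$-energy. Since $\|\widetilde{A}_r - A_r\|_\infty \lesssim r^k$ is small and $A_r$ is uniformly elliptic with constant $\Lambda$, the matrix $\widetilde{A}_r$ is also uniformly elliptic with comparable constants (for $r$ small, which is guaranteed by our choice of $a$). Therefore
\begin{align*}
    \int_B \widetilde{A}_r \nabla v \cdot \nabla v \lesssim_{\Lambda,\gamma} \int_B A_r \nabla u_r \cdot \nabla u_r.
\end{align*}
For the denominators of the frequencies, since $v=u_r$ on $\partial B$ and both $\mu_{\widetilde{A}_r}$ and $\mu_{A_r}$ are pinched between $\Lambda^{-1}$ and $\Lambda$, one has $\int_{\partial B}\mu_{\widetilde{A}_r}|v|^2 \sim \int_{\partial B}\mu_{A_r}|u_r|^2$. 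Combining these two comparisons gives $N_v(B) \lesssim N_{u_r}(B)$. Finally, almost monotonicity of the frequency for $u$ (proved in \cite{GL1, GL2}) implies $N_{u_r}(B)=N_u(0,r)\lesssim N_u(0,2)\leq N$, completing part (i).

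For part (ii), I would argue by the maximum principle applied to $v$, which solves a uniformly elliptic equation in $B$ with boundary data $f_r$. Thus
\begin{align*}
    \max_{\overline{B}}|v| \leq \max_{\partial B}|v| = \max_{\partial B}|u_r| \leq \max_{\overline{B}}|u_r|.
\end{align*}
Since $u_r$ is a solution of $\mathrm{div}(A_r\nabla u_r)=0$ on a ball larger than $B(0,2)$ (because $r<1$) and has frequency bounded by $CN$, Claim \ref{maxest}(i) applied to $u_r$ yields $\max_{\overline{B}}|u_r| \lesssim N^{d/2}(\int_{\partial B}|u_r|^2)^{1/2}$, and the right-hand side equals $N^{d/2}(\int_{\partial B}|v|^2)^{1/2}$ since $v=u_r$ on $\partial B$.

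The main subtlety I expect is tracking the role of the perturbation $\widetilde{A}_r - A_r$: one has to make sure the comparability of energies is uniform in $N$, which is why the smallness estimate $\|\widetilde{A}_r - A_r\|_\infty \lesssim r^k$ is essential, and why the conclusion $N_v(B)\leq CN$ rather than just a weaker polynomial bound is legitimate. Everything else is a clean consequence of orthogonality in the $\widetilde{A}_r$-Dirichlet form combined with the elliptic maximum principle and the previously established Claim \ref{maxest}.
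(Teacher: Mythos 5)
Your proof is correct and follows essentially the same outline as the paper's. Both hinge on the orthogonality $\int_B \widetilde{A}_r\nabla v\cdot\nabla h=0$; the only difference is that you exploit it via the Pythagorean identity (energy minimality of $v$, i.e.\ $\int_B\widetilde{A}_r\nabla v\cdot\nabla v\le\int_B\widetilde{A}_r\nabla u_r\cdot\nabla u_r$ by dropping the nonnegative $h$-term), whereas the paper reaches the same comparison of Dirichlet energies by a short Cauchy--Schwarz/absorption chain. Your route is marginally cleaner and yields the intermediate inequality with constant $1$ before passing to the $A_r$-energy by uniform ellipticity; the conclusion and the treatment of part (ii) via the maximum principle and Claim~\ref{maxest} applied to $u_r$ are exactly as in the paper.
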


 We are now ready to finish the proof. Recall that $L=N\log^{2+\delta}N$. By Claim \ref{vestimates}, $v$ satisfies the hypothesis of Lemma \ref{analytic} with $\kappa=d/2$, so that $\sup_{(1-1/L)(1-1/N)B}|v|\geq  c_0\sup_{(1-1/N)B}|v|$. Note also that, since $N_v(B)\lesssim N$,
 \begin{align*}
     \sup_{(1-1/N)B}|v|\geq \|v\|_{L^2(\partial((1-1/N)B))}\gtrsim \|v\|_{L^2(\partial B)}.
 \end{align*}
  Using \eqref{aux} and choosing $a=a(\Lambda,\gamma,\delta)$ so that $\varepsilon=\varepsilon(a)$ is small enough, we can then estimate
\begin{align*}
    \sup_{(1-1/L)(1-1/N)B}|u_r|\geq \sup_{(1-1/L)(1-1/N)B}|v|-\sup_{(1-1/L)(1-1/N)B}|h|\\
    \geq c_0\sup_{(1-1/N)B}|v|-\varepsilon\|v\|_{L^2(\partial B)}
    \geq c_0\sup_{(1-1/N)B}|v|-\frac{1}{2}c_0\sup_{(1-1/N)B}|v|\\
    \geq \frac{1}{2}c_0\left(\sup_{(1-1/N)B}|u_r|-\sup_{(1-1/N)B}|h|\right)\geq \frac{1}{2}c_0\left(\sup_{(1-1/N)B}|u_r|-\varepsilon \|u_r\|_{L^2(\partial B)}\right)\\
    \geq \frac{1}{2}c_0\left(\sup_{(1-1/N)B}|u_r|-C\varepsilon \|u_r\|_{L^2(\partial((1-1/N)B))}\right)\geq \frac{1}{4}c_0\sup_{(1-1/N)B}|u_r|.
\end{align*}
Finally, we have that 
\begin{align*}
    \sup_{B(0,(1-1/L)(1-1/N)r)}|u|=\sup_{(1-1/L)(1-1/N)B}|u_r|\geq \tilde{c_0}\sup_{(1-1/N)B}|u_r|\\
    =\tilde{c_0}\sup_{B(0,(1-1/N)r)}|u|,
\end{align*}
which is equivalent to \eqref{aux1}.

\section{The inequality for eigenfunctions}
\label{seigen}
We now have all the tools needed to prove Theorem \ref{main}. We first state a sharper version for balls that have radius smaller than the wavelength. Throughout this section, $(M,g)$ will be a compact smooth Riemannian manifold, and $\varphi_{\ld}$ an eigenfunction of the Laplace-Beltrami operator on $M$, $\Delta_g\varphi_{\ld}+\ld\varphi_{\ld}=0$. 

\begin{proposition}
\label{small}
    Let $L\vcentcolon=\sqrt{\ld}\log^{2+\delta}\ld$ with $\delta>0$, and let $r_0\colon=\frac{a}{\sqrt{\ld}}$, where $a$ is a small constant depending on $(M,g)$ and $\delta$. Then for any geodesic ball $B_g(x,r)$ with $r<r_0$ we have 
    \begin{align}
        \label{smallballs1}
        \sup_{B_g\left(x,\left(1+\frac{1}{L}\right)r\right)}|\varphi_{\ld}|\leq C(M,g,\delta)\sup_{B_g(x,r)}|\varphi_{\ld}|;\\
        \label{smallballs2}
        \sup_{B_g(x,r)}|\nabla\varphi_{\ld}|\leq C(M,g,\delta)\frac{L}{r}\sup_{B_g(x,r)}|\varphi_{\ld}|.
    \end{align}
\end{proposition}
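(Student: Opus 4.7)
The plan is to deduce Proposition~\ref{small} from Theorem~\ref{ellipticmain} via the standard lifting trick that turns an eigenfunction of $\Delta_g+\ld$ into a solution of a divergence-form elliptic PDE on $M\times\R$.

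First I would observe that \eqref{smallballs1} implies \eqref{smallballs2} by a routine interior gradient estimate: for $y\in B_g(x,r)$ one has $B_g(y,r/L)\subset B_g(x,(1+1/L)r)$, and the standard elliptic estimate for the equation $\Delta_g\varphi_\ld=-\ld\varphi_\ld$ yields
\[
|\nabla\varphi_\ld(y)|\lesssim \tfrac{L}{r}\sup_{B_g(y,r/L)}|\varphi_\ld|+\tfrac{r\ld}{L}\sup_{B_g(y,r/L)}|\varphi_\ld|,
\]
and the second term is absorbed since $r<1/\sqrt{\ld}$ and $L\geq\sqrt{\ld}$. So the content of the proposition lies in \eqref{smallballs1}.

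For \eqref{smallballs1}, fix $x_0\in M$ and pick a chart around $x_0$ (harmonic or normal coordinates) in which $g_{ij}(x_0)=\delta_{ij}$, the coefficients $g^{ij}$ and $\sqrt{|g|}$ are smooth with $\mathcal{C}^k$-bounds depending only on $(M,g)$, and coordinate balls are comparable to Riemannian balls. Define the lifted function
\[
u(x,t)\defeq \varphi_\ld(x)\cosh(\sqrt{\ld}\,t).
\]
Using $\partial_t^2\cosh(\sqrt{\ld}\,t)=\ld\cosh(\sqrt{\ld}\,t)$ and the eigenvalue equation, a direct computation shows that $u$ solves
\[
\sum_i\partial_i\bigl(\sqrt{|g|}\,g^{ij}\partial_j u\bigr)+\partial_t\bigl(\sqrt{|g|}\,\partial_t u\bigr)=0
\]
on a neighborhood of $(x_0,0)$ in $\R^{d+1}$. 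This is a uniformly elliptic divergence-form PDE whose coefficient matrix, after a harmless linear normalization, fits precisely the framework of Theorem~\ref{ellipticmain}.

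The crucial input is the frequency estimate $N_u((x_0,0),2)\leq C(M,g)\sqrt{\ld}$. This is the well-known fact that the $\cosh(\sqrt{\ld}\,t)$ factor grows exponentially at rate $\sqrt{\ld}$, so $u$ has \emph{polynomial degree} $\sqrt{\ld}$ on $O(1)$-scales; it can be verified by integration by parts in the definition \eqref{frequency} combined with the eigenvalue equation. Taking $N\asymp\sqrt{\ld}$ and $\eta=1$ in Theorem~\ref{ellipticmain} then gives $\sup_{B((x_0,0),(1+1/L)r)}|u|\leq C\sup_{B((x_0,0),r)}|u|$ for all $r<a/\sqrt{\ld}$. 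On such small balls $\cosh(\sqrt{\ld}\,t)\in[1,\cosh a]$, so $\sup|u|$ and $\sup_{B_g(x_0,r)}|\varphi_\ld|$ are comparable, and \eqref{smallballs1} follows. The main obstacle I expect is to verify the frequency bound $N_u\lesssim\sqrt{\ld}$ rigorously and uniformly in $x_0\in M$, and to arrange the coordinate setup so that the $\mathcal{C}^k$-norms of the coefficients of the lifted equation, as well as the constants relating coordinate and Riemannian balls, are uniform across the manifold.
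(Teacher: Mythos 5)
Your proposal follows essentially the same route as the paper: lift $\varphi_\ld$ to a solution of a divergence-form equation on $M\times\R$, invoke the Donnelly--Fefferman frequency bound, and feed it into Theorem~\ref{ellipticmain} with $N\asymp\sqrt\ld$ and $\eta=1$. The $\cosh(\sqrt\ld\, t)$ versus $e^{\sqrt\ld\, t}$ lift is immaterial. Two points need to be sharpened, however.

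First, the frequency bound $N_u\lesssim\sqrt\ld$ is \emph{not} something you can ``verify by integration by parts in the definition~\eqref{frequency} combined with the eigenvalue equation.'' That heuristic (the $\cosh$ factor grows at rate $\sqrt\ld$) only gives a \emph{lower} bound on the growth along the $t$-axis; the needed upper bound is precisely the Donnelly--Fefferman doubling estimate, proved via Carleman inequalities in \cite{DF}, and you should cite it as such rather than suggest it is elementary. You correctly flag this as the main obstacle, but the proposed justification is misleading.

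Second, the comparison between coordinate balls and geodesic balls is not harmless here and deserves an explicit line. The gain in~\eqref{aux1} is only a factor $(1+1/L)$ with $L=\sqrt\ld\log^{2+\delta}\ld$, while normal coordinates distort metric balls by a factor $1+O(r^2)$. You therefore need $r^2 L\ll 1$ to transfer the conclusion from Euclidean to geodesic balls without losing the expansion factor; this is where the restriction $r<a/\sqrt\ld$ is used quantitatively (giving $r^2L\lesssim a^2\log^{2+\delta}\ld/\sqrt\ld\to 0$), and the paper spells out the resulting inclusions $B_e(\widetilde x,(1-1/L)r)\subset B_{\widetilde g}(\widetilde x,r)$ and $B_{\widetilde g}(\widetilde x,(1+1/L)r)\subset B_e(\widetilde x,(1+2/L)r)$. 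Saying ``coordinate balls are comparable to Riemannian balls'' is not enough at this level of precision.
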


\begin{remark}
The result says that in balls of radius smaller than the wavelength we have a sharp Bernstein inequality up to logarithms. Inequality \eqref{smallballs1} (with $\sqrt{\ld}$ instead of $L$) was conjectured in \cite{DF2}, where an $L^2$ version is proved. 
\end{remark}

We first show how to obtain Theorem \ref{main} from Proposition \ref{small}, and then prove Proposition \ref{small}.

\begin{proof}[Proof of Theorem \ref{main}]
Take a ball $B_g(x,r)$; if $r<r_0$, we have nothing to prove. If $r>r_0$, let $y\in \overline{B_g(x,r)}$ be such that $|\nabla\varphi_{\ld}(y)|=\sup_{B_g(x,r)}|\nabla \varphi_{\ld}|$, and consider a ball $b\subset B_g(x,r)$ of radius $r_0$ such that $y\in \overline{b}$. Then, by Proposition \ref{small} applied to $b$,
\begin{align*}
    \sup_{B_g(x,r)}|\nabla \varphi_{\ld}|=|\nabla\varphi_{\ld}(y)|\lesssim \frac{L}{r_0}\sup_{b}|\varphi_{\ld}|\lesssim \ld\log^{2+\delta}\ld\sup_{B_g(x,r)}|\varphi_{\ld}|,
\end{align*}
where the implied constants depend on $(M,g)$ and $\delta$ only, and the theorem is proved.
\end{proof}

\begin{proof}[Proof of Proposition \ref{small}]
Let us first note that \eqref{smallballs2} follows easily from \eqref{smallballs1}. In fact, let $y\in\overline{B_g(x,r)}$ be such that $|\nabla \varphi_{\ld}(y)|=\sup_{B_g(x,r)}|\nabla\varphi_{\ld}|$. An application of standard elliptic estimates followed by \eqref{smallballs1} gives 
\begin{align*}
    |\nabla \varphi_{\ld}(y)|\lesssim \frac{L}{r}\sup_{B_g(y,\frac{r}{L})}|\varphi_{\ld}|\lesssim \frac{L}{r}\sup_{B_g(x,(1+\frac{1}{L})r)}|\varphi_{\ld}|\lesssim \frac{L}{r}\sup_{B_g(x,r)}|\varphi_{\ld}|,
\end{align*}
which is \eqref{smallballs2}. 

We will now prove \eqref{smallballs1} as a consequence of \eqref{aux1}. We consider a system of local coordinates on $M$, and identify the metric $g$ with a matrix $(g_{ij})$. We denote by $g^{-1}$ the inverse of this matrix, and set $|g|\colon=\det(g_{ij})$. We recall that the Laplace-Beltrami operator has the expression
\begin{align*}
    \Delta_g=\frac{1}{\sqrt{|g|}}\diver(\sqrt{|g|}g^{-1}\nabla(\cdot)),
\end{align*}
where the divergence and gradient are in the Euclidean sense. For $\varphi_{\ld}$ an eigenfunction of $\Delta_g$, consider the function 
\begin{align*}
    u(x,t)=\varphi_{\ld}(x)e^{\sqrt{\ld}t}.
\end{align*}
Note that $u$ is harmonic in $\widetilde{M}\colon=M\times\R$ with respect to the metric $\widetilde{g}=g\otimes \mathrm{dt}$, which means that in local coordinates $u$ satisfies the elliptic equation 
\begin{align*}
    \diver(\sqrt{|\widetilde{g}|}\widetilde{g}^{-1}\nabla u)=\diver(A\nabla u)=0.
\end{align*}
By results of Donnelly and Fefferman (\cite{DF}), $N_u(y,s)\leq C\sqrt{\ld}$ for any $s$ smaller than the injectivity radius of $M$. Now, \eqref{aux1} is essentially \eqref{smallballs1} but in Euclidean balls rather than geodesic balls. Let $\widetilde{x}=(x,0)$; now we consider normal coordinates in $\widetilde{M}$ centered at $\widetilde{x}$, and note that in these coordinates $A(\widetilde{x})=Id$. Denoting Euclidean balls with the subscript $e$, we observe that 
\begin{align*}
    B_e(\widetilde{x},r\sqrt{1-c_1r^2})\subset B_{\widetilde{g}}(\widetilde{x},r)\subset B_e(\widetilde{x},r\sqrt{1+c_2r^2})
\end{align*}
if $r$ is small enough. Our choice of $r_0$ then guarantees that
\begin{align*}
    B_{\widetilde{g}}\left(\widetilde{x},\left(1+\frac{1}{L}\right)r\right)\subset B_e\left(\widetilde{x},\left(1+\frac{2}{L}\right)r\right)
\end{align*}
and 
\begin{align*}
    B_e\left(\widetilde{x},\left(1-\frac{1}{L}\right)r\right)\subset B_{\widetilde{g}}\left(\widetilde{x},r\right)
\end{align*}
An application of \eqref{aux1} then gives 
\begin{align*}
    \sup_{B_{\widetilde{g}}\left(\widetilde{x},\left(1+\frac{1}{L}\right)r\right)}|u|\leq \sup_{B_e\left(\widetilde{x},\left(1+\frac{2}{L}\right)r\right)}|u|\lesssim \sup_{B_e\left(\widetilde{x},\left(1-\frac{1}{L}\right)r\right)}|u|\lesssim \sup_{B_{\widetilde{g}}\left(\widetilde{x},r\right)}|u|.
\end{align*}
Finally, using once again that $r<a/\sqrt{\ld}$, we have that 
\begin{align*}
    \sup_{B_{g}\left(x,\left(1+\frac{1}{L}\right)r\right)}|\varphi_{\ld}|\leq \sup_{B_{\widetilde{g}}\left(\widetilde{x},\left(1+\frac{1}{L}\right)r\right)}|u|\lesssim \sup_{B_{\widetilde{g}}\left(\widetilde{x},r\right)}|u|\\
    \lesssim \sup_{B_{g}\left(x,r\right)}|\varphi_{\ld}|e^{\sqrt{\ld}r}\lesssim \sup_{B_{g}\left(x,r\right)}|\varphi_{\ld}|,
\end{align*}
and \eqref{smallballs1} is proved. 
\end{proof}

\section{A stronger inequality in dimension two}
\label{s2d}
We assume now that the dimension of $M$ is two. Following the methods of Dong (\cite{Do}), we will show that there is a better Bernstein inequality than in higher dimension. 
Let $\varphi_\ld$ be an eigenfunction of the Laplace-Beltrami operator on $(M,g)$, $\Delta_g\varphi_\ld+\ld\varphi_\ld=0$. The main result of \cite{Do} can be formulated as  
\begin{align}\label{eq:Dongmain}
    \sup_{B_g(x,r)}|\nabla\varphi_{\ld}|\leq C(M,g)\max\left\{\frac{\sqrt{\ld}}{r},\ld^{\frac{3}{4}}\right\}\sup_{B_g(x,r)}|\varphi_{\ld}|.
\end{align}
It says that the sharp Bernstein inequality holds for scales up to $\ld^{-\frac{1}{4}}$.

We outline the proof given by Dong. Let
\begin{align*}
    q=|\nabla \varphi_{\ld}|^2+\frac{\ld}{2} |\varphi_{\ld}|^2.
\end{align*}
Denote by $K$ the Gaussian curvature of the surface $(M, g)$. It is shown in \cite{Do2} that
\begin{equation}\label{eq:Dongln}
\Delta_g \log q\ge -\lambda+2
\min\{K,0\}.
\end{equation}
Next, fix a point $x_0\in M$ and in a neighborhood of $x_0$ define the function
\[M(x)=\max\{q(y): d_g(y,x_0)\le d_g(x,x_0)\}.\]
Using \eqref{eq:Dongln} and standard properties of the Laplacian, it is not difficult to see that $\Delta_g M\ge -c\lambda$, see 
\cite{Do} for details.
The function $M$ depends only on the distance from $x$ to $x_0$ and is non-decreasing. Let $H$ be an upper bound for the absolute value of the sectional curvature of $M$.   Then the comparison theorem for the Laplace-Beltrami operators, see for example \cite[Lemma 7.1.9]{P} , implies
\begin{align*}
-c\lambda\le\Delta_g \log M\leq \Delta_H \log M,
\end{align*}
where $\Delta_H$ is the Laplace operator on the surface of constant sectional curvature $-H$. The metric on this surface is given by
 \[\mathrm{d}s^2_H=\mathrm{d}r^2+\rho_0^2(r)\mathrm{d}\theta^2\] where $\rho_0(r)=\frac{\sinh (\sqrt{H}r)}{\sqrt{H}}$. Denoting $\log M(x)=F(r)$ for $r=d_g(x,x_0)$, we conclude
\begin{align*}
\frac{1}{\rho_0}\frac{\mathrm{d}}{\mathrm{d}r}\left(\rho_0\frac{\mathrm{d}}{\mathrm{d}r}F(r)\right)\ge -c\ld.
\end{align*}
  Defining $t(r)=\int^{r}\frac{\mathrm{d}\tau}{\rho_0(\tau)}$, the inequality becomes:
\begin{align}
\label{in:5}
\frac{\mathrm{d}^2}{\mathrm{d}t^2}F\ge -c\lambda \rho_0^2.
\end{align}
The convexity estimate above can be combined with another result of \cite{Do2}, which says that
\begin{equation}\label{eq:Dongdouble}
 F(2r)-F(r)\lesssim \sqrt{\ld},   
\end{equation}
to derive the following inequality
\[
M(r)-M((1-\varepsilon)r)\lesssim (\sqrt{\ld}+\ld r^2)\varepsilon,\]
for small enough $r$ and any $\varepsilon>0$. 
For $r\lesssim \ld^{-1/4}$ the first term on the right hand side of the last inequality dominates and \eqref{eq:Dongmain} follows. 
Our improvement of \eqref{eq:Dongmain} is based on replacing the inequality \eqref{eq:Dongdouble} by a refined version of it for the case $r\gtrsim \lambda^{-1/2}.$

The main result of this section is the following
\begin{theorem}
\label{2dmain}
Let $(M,g)$ be a compact smooth Riemannian surface, and let $\varphi_{\ld}$ be a solution of $\Delta_g \varphi_{\ld} + \ld \varphi_{\ld}=0$. Let $B_g(x,r)\subset M$ any geodesic ball. Then we have:
\begin{align}
\label{2dbernstein}
    \sup_{B_g(x,r)}|\nabla\varphi_{\ld}|\leq C(M,g)\max\left\{\frac{\sqrt{\ld}}{r},\sqrt{\ld}\log\ld\right\}\sup_{B_g(x,r)}|\varphi_{\ld}|.
\end{align}
\end{theorem}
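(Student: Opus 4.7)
The plan is to follow Dong's strategy from \cite{Do} as sketched in the text, keeping his two-dimensional geometric setup (the auxiliary function $q$, the radial majorant $M$, the convexity estimate \eqref{in:5}) unchanged, and replacing only the doubling input \eqref{eq:Dongdouble} by a refined version valid in the regime $r\gtrsim 1/\sqrt{\ld}$. By the authors' own remark this is the precise place where the improvement from $\ld^{3/4}$ to $\sqrt{\ld}\log\ld$ must originate.

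First I would establish a refined doubling of the form $F(2r)-F(r)\lesssim\log\ld$ (or any comparable quantity sufficient to replace $\sqrt{\ld}$ by $\log\ld$ in Dong's convexity-interpolation step) valid for $r\gtrsim 1/\sqrt{\ld}$. The natural approach is to combine two ingredients already available in the paper. At sub-wavelength scales $s\le C/\sqrt{\ld}$, Proposition \ref{small} together with the standard interior gradient estimate yields a bounded, $\ld$-independent, doubling of $q$. At scales above the wavelength one uses the Donnelly-Fefferman $L^2$ doubling \eqref{eq:DFL2}, which is logarithm-free, and upgrades it to an $L^\infty$ doubling with a polynomial-in-$\ld$ loss via elliptic regularity. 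Chaining the wavelength-scale doubling across the $O(\sqrt{\ld}\,r)$ steps separating $B_g(x_0,r)$ from $B_g(x_0,2r)$ should then turn the polynomial loss on $M$ into a single logarithm once we pass to $F=\log M$.

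Second, I would feed the refined doubling into Dong's convexity-interpolation argument verbatim. Combining $F''(t)\ge -c\ld\rho_0^2$ with the new doubling on an interval of length $\sim 1$ in the $t$-variable produces the pointwise bound $F'(t)\lesssim\log\ld+\ld r^2$, which integrates to the exact analogue of Dong's final estimate with $\log\ld$ in place of $\sqrt{\ld}$. For $r\lesssim 1/\log\ld$ the logarithmic term dominates $\ld r^2$ and the sharp Bernstein inequality $\sup_{B_g(x,r)}|\nabla\varphi_{\ld}|\lesssim(\sqrt{\ld}/r)\sup_{B_g(x,r)}|\varphi_{\ld}|$ follows as in the last step of Dong's proof; for $r\gtrsim 1/\log\ld$ the bound $\sqrt{\ld}\log\ld$ dominates and \eqref{2dbernstein} reduces to the global Bernstein inequality \eqref{global}.

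The hard part is establishing the refined doubling with only a single $\log\ld$ loss, rather than the $\log^{2+\delta}\ld$ penalty that a naive iteration of Proposition \ref{small} across $O(\sqrt{\ld}r)$ wavelengths would produce. The argument must exploit the logarithm-free $L^2$ doubling \eqref{eq:DFL2} directly and absorb the polynomial-in-$\ld$ cost of the $L^2\to L^\infty$ upgrade only once, into a single logarithm upon taking the $\log$. Keeping the $|\nabla\varphi_{\ld}|^2$ piece of $q$ under control without an extra logarithmic penalty, via gradient estimates on balls of radius $1/\sqrt{\ld}$, is the most delicate technical point.
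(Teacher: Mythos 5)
Your high-level reading of the paper's remark is right: the refinement lives in replacing Dong's doubling input with something better in the regime $r\gtrsim\ld^{-1/2}$. But the specific quantity you propose to prove, and the way you propose to feed it into Dong's argument, both go wrong.

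First, the proposed estimate $F(2r)-F(r)\lesssim\log\ld$ is false. For a sectoral (highest-weight) spherical harmonic near a pole, the $L^2$ doubling exponent at scales $r\gtrsim\ld^{-1/2}$ is of order $\sqrt{\ld}$, and the same is then true for $M$ up to polynomial factors; so $F(2r)-F(r)$ can genuinely be of size $\sqrt{\ld}$. Your suggested way of producing the bound does not produce it either: one application of the DF $L^2$ doubling over a factor $2$ costs $e^{C\sqrt{\ld}}$, and a single $L^\infty$ upgrade at the end only adds $\log(\ld r^2)$, leaving $F(2r)-F(r)\lesssim\sqrt{\ld}$; while chaining the wavelength-scale increment bound $F(r+\ld^{-1/2})-F(r)\lesssim 1/r$ over the $\sim\sqrt{\ld}\,r$ steps from $r$ to $2r$ also gives back $\sqrt{\ld}$. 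There is no route to $\log\ld$ here.

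Second, even granting such a doubling, feeding it into Dong's three-point convexity ``verbatim'', i.e.\ on the $t$-interval of length $\sim 1$ between $t(r)$ and $t(2r)$, would not improve the threshold. That version of the argument yields $F(r)-F\bigl((1-\epsilon)r\bigr)\lesssim(A+\ld r^2)\epsilon$ when $F(2r)-F(r)\lesssim A$, and the final elliptic estimate forces $\epsilon\sim\ld^{-1/2}$, so one needs $A+\ld r^2\lesssim\sqrt{\ld}$; with $A=\log\ld$ this is still $r\lesssim\ld^{-1/4}$, Dong's threshold. Your sentence ``for $r\lesssim 1/\log\ld$ the logarithmic term dominates $\ld r^2$'' has the comparison backwards: at $r\sim(\log\ld)^{-1}$ one has $\ld r^2\sim\ld/\log^2\ld\gg\log\ld$. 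The quadratic term is dominated by $\log\ld$ only for $r\lesssim\sqrt{\log\ld/\ld}$, barely above a wavelength.

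What the paper actually does is localize \emph{both} ingredients to a single wavelength. It proves the one-step increment estimate
$F\bigl(r+\ld^{-1/2}\bigr)-F(r)\lesssim 1/r+\log(\ld r)\lesssim 1/r$ for $\ld^{-1/2}\lesssim r\lesssim(\log\ld)^{-1}$, by applying the DF $L^2$ doubling once over the short step $a=1+1/(r\sqrt\ld)$ (which gives $\sqrt\ld\log a\sim 1/r$) and a single $L^\infty$ upgrade (which gives $\log(\ld r)$, absorbed by $1/r$ exactly when $r\lesssim(\log\ld)^{-1}$; this, not a doubling refinement, is where the threshold comes from). It then runs Dong's convexity on the three points $r(1-\ld^{-1/2})$, $r$, $r+\ld^{-1/2}$, i.e.\ on a $t$-interval of length $\sim(r\sqrt\ld)^{-1}$, not $\sim 1$. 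On this short interval the convexity error is $\ld\rho_0^2\,(t_3-t_1)\sim\sqrt\ld\,r\lesssim\sqrt\ld$ for \emph{all} $r\lesssim 1$, negligible against the slope $\sim\sqrt\ld$ coming from the increment bound, and $F(r)-F(r(1-\ld^{-1/2}))\lesssim 1$ follows. Your plan localizes the doubling input but not the convexity step; both must be localized, and once they are, the needed increment bound is $1/r$ (not $\log\ld$), which \emph{is} provable. Proposition \ref{small} plays no role in the Section \ref{s2d} argument, and the $|\nabla\varphi_\ld|^2$ part of $q$ is handled by the standard elliptic estimate $\sup_{B_{ar}}|\nabla\varphi_\ld|^2\lesssim\ld^2\int_{B_{ar+\ld^{-1/2}}}|\varphi_\ld|^2$ with no extra logarithmic loss; it is not the delicate point.
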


\begin{remark}
 Here we are able to refine Dong's method to get a sharp inequality for balls of radii up to $(\log\ld)^{-1}$. We do not know whether the $\log\ld$ is needed or it is a feature of the proof. 
\end{remark}

\begin{proof}
We fix a geodesic ball $B$ and denote by $x_0$ its center and by $r_0$ its radius. Our aim is to prove the inequality \eqref{2dbernstein} for this ball. We may assume that $r_0\gtrsim\ld^{-1/2}$, otherwise the inequality follows from \eqref{eq:Dongmain} or from our Theorem \ref{main}.

As above we set $M(x)=\sup_{y\in B_r}q(y)$, where $B_r$ is a geodesic ball of radius $r=d_g(x,x_0)$ centered at $x_0$. 
 Finally, we set $F(r)=\log M(r)$. \\

Our first aim is to estimate $F\left(r+\frac{1}{\sqrt{\lambda}}\right)-F(r)$. Let $a=1+\frac{1}{r\sqrt{\ld}}=1+\varepsilon$ so that $ar=r+1/\sqrt{\ld}$.  On the wave scale $\frac{1}{\sqrt{\ld}}$ the eigenfunction  $\varphi_{\ld}$ is approximately harmonic and the following elliptic estimates hold:
\begin{align}
\label{ell1}
\sup_{B_{ar}}|\varphi_{\ld}|^2\lesssim\ld\int_{B_{ar+1/\sqrt{\ld}}}|\varphi_{\ld}|^2; \\
\label{ell2}
\sup_{B_{ar}}|\nabla \varphi_{\ld}|^2\lesssim\ld^2\int_{B_{ar+1/\sqrt{\ld}}}|\varphi_{\ld}|^2.
\end{align}
We can then estimate:
\begin{align*}
\max_{B_{ar}}\left(|\nabla \varphi_{\ld}|^2+\frac{\lambda}{2} |\varphi_{\ld}|^2 \right)\lesssim\ld^2\int_{B_{ar+1/\sqrt{\ld}}}|\varphi_{\ld}|^2 +\ld^2\int_{B_{ar+1/\sqrt{\ld}}}|\varphi_{\ld}|^2 \\
\lesssim\ld^2\int_{B_{ar+1/\sqrt{\ld}}}|\varphi_{\ld}|^2.
\end{align*}
Iterating the estimate \eqref{eq:DFL2} of Donnelly and Fefferman, one gets
\begin{align*}
	\int_{B_{ar+1/\sqrt{\ld}}}|\varphi_{\ld}|^2\lesssim \left(a+\frac{1}{r\sqrt{\ld}}\right)^{C\sqrt{\lambda}}\int_{B_{r}}|\varphi_{\ld}|^2
	\lesssim\left(a+\frac{1}{r\sqrt{\lambda}}\right)^{C\sqrt{\lambda}}r^2M(x),
\end{align*} 
where $d_g(x,x_0)=r$.
Combining the last two inequalities and taking the logarithms, we get:
\begin{align*}
F(ar)-F(r)\lesssim \sqrt{\lambda}\log \left(a+\frac{1}{r\sqrt{\ld}}\right)+\log (\lambda r).
\end{align*}
Now we ask that $1/ \sqrt{\ld}\lesssim r\lesssim 1/\log\ld$. Then
\begin{align}
\label{in:4}
F\left(r+\frac{1}{\sqrt{\lambda}}\right)-F(r)= F(ar)-F(r)\lesssim\frac{1}{r}+\log\ld\lesssim \frac{1}{r}.
\end{align}
Note that the estimates up to now hold in any dimension. We will combine \eqref{in:4} with a two-dimensional inequality \eqref{in:5} of Dong.

Recall that we defined $t(r)$ by $t(r)=\int^r\frac{d\tau}{\rho_o(\tau)}$, where $\rho_0(\tau)=\frac{\sinh (\sqrt{H}\tau)}{\sqrt{H}}$ is a positive increasing function of $\tau$ and for small $\tau$ we have $\rho_0(\tau)\sim \tau$.  Let $t_1=t\left(r\left(1-\frac{1}{\sqrt{\lambda}}\right)\right)$, $t_2=t(r)$, and $t_3=t\left(r+\frac{1}{\sqrt{\lambda}}\right)$. The standard mean value theorem and \eqref{in:5} then imply 
\begin{align*}
\frac{ F\left(r+\frac{1}{\sqrt{\lambda}}\right)-F(r)}{t_3-t_2}-\frac{F(r)-F\left(r\left(1-\frac{1}{\sqrt{\lambda}}\right)\right)}{t_2-t_1} \gtrsim -\lambda \rho_0^2\left(r+\frac{1}{\sqrt{\lambda}}\right)(t_3-t_1).
\end{align*}
Now, 
\begin{align*}
t_2-t_1\sim\int_{r\left(1-\frac{1}{\sqrt{\lambda}}\right)}^r\frac{\mathrm{d}\tau}{\tau}\sim\frac{1}{\sqrt{\lambda}}; \\
t_3-t_2\sim\int_r^{r+\frac{1}{\sqrt{\lambda}}}\frac{\mathrm{d}\tau}{\tau}\sim\frac{1}{\sqrt{\lambda}r}\sim t_3-t_1.
\end{align*}
Using \eqref{in:4} in the previous inequality we thus get:
\begin{align*}
\frac{F(r)-F\left(r\left(1-\frac{1}{\sqrt{\lambda}}\right)\right)}{t_2-t_1}\lesssim \sqrt{\lambda}r\frac{1}{r}+\lambda r^2\frac{1}{\sqrt{\lambda}r}
\end{align*}
so that 
\begin{align*}
F(r)-F\left(r\left(1-\frac{1}{\sqrt{\lambda}}\right)\right)\lesssim 1.
\end{align*}

The rest of the proof is similar to one in \cite{Do}.
Set $r=r_0$ and $r_1\defeq r_0\left(1-\frac{1}{\sqrt{\lambda}}\right)$; we obtained:
\begin{align}
\label{in:6}
\tilde{M}(r)=M(x)\lesssim M(x_1)=\tilde{M}(r_1),
\end{align}
where $d_g(x,x_0)=r_0$ and $d_g(x_1,x_0)=r_1$.
Remember that the above holds for $\lambda^{-\frac{1}{2}}\lesssim r \lesssim (\log\lambda)^{-1}$. For any such $r$, consider $q(x^*)$ for some $x^*\in \overline{B_{r_1}}$. Then for $\sigma\sim \frac{r}{\sqrt{\lambda}}$ we have $ B_{\sigma}(x^*)\subset B_r$, and by elliptic estimates
\begin{align*}
|\nabla \varphi_{\ld}(x^*)|^2\lesssim \frac{1}{\sigma^4}\int_{ B_{\sigma}(x^*)}|\varphi_{\ld}|^2 \lesssim \frac{1}{\sigma^2}\max_{B_r}|\varphi_{\ld}|^2 \sim \frac{\lambda}{r^2}\max_{B_r}|\varphi_{\ld}|^2.
\end{align*}
This gives $\tilde{M}(r_1) \lesssim \lambda r^{-2}\sup_{B_r}|\varphi_{\ld}|^2$, and finally, by \eqref{in:6},
\begin{align}
\label{in:7}
\sup_{B(x_0,r_0)}|\nabla \varphi_{\ld}|\leq \sqrt{\tilde{M}(r_0)} \lesssim \sqrt{\tilde{M}(r_1)} \lesssim \frac{\sqrt{\lambda}}{r_0}\sup_{B(x_0,r_0)}|\varphi_{\ld}|,
\end{align}
which is the sharp Bernstein inequality for $\lambda^{-\frac{1}{2}}\lesssim r_0 \lesssim (\log\lambda)^{-1}$. 

For $r_0\gtrsim (\log\lambda)^{-1}$, we can take $r'\sim (\log\lambda)^{-1}$ such that $\sup_{B(x_0,r_0)}|\nabla \varphi_{\ld}|=\sup_{B(p,r')}|\nabla \varphi_{\ld}|$ and $B(p, r')\subset B(x_0, r_0)$. Then applying \eqref{in:7} to $B(p, r')$ we obtain:
\begin{align*}
\sup_{B(x_0,r_0)}|\nabla \varphi_{\ld}|=\sup_{B(p,r')}|\nabla \varphi_{\ld}| \lesssim \frac{\sqrt{\lambda}}{r'}\sup_{B(p,r')}|\varphi_{\ld}| \lesssim {\sqrt{\lambda}\log \lambda}\sup_{B(x_0, r_0)}|\varphi_{\ld}|,
\end{align*}
which concludes the proof of Theorem \ref{2dmain}. 
\end{proof}

\section*{Appendix}

The goal of this section is to prove a quite sharp $L^p$ version of the Bernstein inequality for harmonic functions in $\R^d$. We start with a Bernstein inequality for harmonic polynomials, which was proved in \cite{M}. The sharp inequality for the supremum norm  in dimensions two and three was studied by Szeg\"o \cite{S} several decades earlier.

	\begin{lemma}\label{poly} Let $P_N$ be a harmonic polynomial of degree $N$ in $\R^d$ and let $1\le p\le\infty$. Then
		\begin{equation} \|\nabla P_N\|_{L^p(B(0,r))}\lesssim \frac{N}{r}\|P_N\|_{L^p(B(0,r))}.
		\end{equation}
	
		\end{lemma}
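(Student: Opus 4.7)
The strategy is to combine a mean-value interior gradient bound with a ``polynomial doubling'' tailored to harmonic polynomials. First I scale by $x\mapsto x/r$ to reduce to the case $r=1$.

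Since $P_N$ is a polynomial, it is harmonic on all of $\R^d$, and each $\partial_i P_N$ is harmonic. The standard interior estimate derived from the gradient mean-value identity (composed with H\"older) yields $|\nabla P_N(x)|\le C_d\rho^{-1-d/p}\|P_N\|_{L^p(B(x,\rho))}$ whenever $B(x,\rho)$ lies in the domain of harmonicity. I apply this with $\rho=1/N$ and ambient ball $B(0,1+1/N)$: every $x\in B(0,1)$ satisfies $B(x,1/N)\subset B(0,1+1/N)$. Raising to the $p$-th power, integrating over $x\in B(0,1)$, and swapping integrals via Fubini (the crucial gain being $|B(y,1/N)\cap B(0,1)|\le CN^{-d}$) produces
\[\|\nabla P_N\|_{L^p(B(0,1))}\lesssim N\,\|P_N\|_{L^p(B(0,1+1/N))}.\]

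It remains to prove the doubling $\|P_N\|_{L^p(B(0,1+1/N))}\lesssim\|P_N\|_{L^p(B(0,1))}$. This is where harmonicity is genuinely used---an analogous inequality fails for generic polynomials of degree $N$, as Chebyshev-type examples can grow by $e^{\sqrt N}$ over an enlargement of size $1/N$. Let $g(s):=\|P_N(s\cdot)\|_{L^p(S^{d-1})}$. Using the spherical-harmonic expansion $P_N(s\omega)=\sum_{k=0}^N s^k Y_k(\omega)$, the radial derivative of $P_N$ restricted to the sphere of radius $s$ is, up to a factor of $s^{-1}$, the image of the spherical polynomial $P_N(s\cdot)|_{S^{d-1}}$ under the degree-multiplier operator $\sum Y_k\mapsto\sum kY_k$ (comparable to $\sqrt{-\Delta_{S^{d-1}}}$). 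Applying the classical $L^p$ Bernstein inequality on $S^{d-1}$ for this operator, together with H\"older's inequality applied to $\partial_s g(s)^p = p\int|P_N|^{p-2}P_N\,\partial_r P_N\,d\omega$, yields the differential inequality $g'(s)\lesssim(N/s)\,g(s)$. Gronwall then gives $g(s)\le s^{CN}g(1)$ for $s\ge 1$ and $g(s)\ge s^{CN}g(1)$ for $s\le 1$; in particular $g(1+1/N)\lesssim g(1)$ and $g(1-1/N)\gtrsim g(1)$.

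To close the doubling, note that since $|P_N|^p$ is subharmonic $g$ is nondecreasing. Therefore
\[\|P_N\|_{L^p(B(0,1))}^p\ge \int_{1-1/N}^1 s^{d-1}g(s)^p\,ds\gtrsim N^{-1}g(1)^p,\]
while the annular contribution $\int_1^{1+1/N}s^{d-1}g(s)^p\,ds\lesssim N^{-1}g(1)^p$. Combining these proves $\|P_N\|_{L^p(B(0,1+1/N))}\lesssim\|P_N\|_{L^p(B(0,1))}$, and together with the interior step this finishes the lemma. The main obstacle is clearly the doubling: the ODI for $g$ coming from Bernstein on $S^{d-1}$ is the only place where the assumption that $P_N$ is \emph{harmonic} (and not merely a polynomial) genuinely enters.
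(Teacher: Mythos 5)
Your proof is correct, and it follows the same overall two-step architecture as the paper's (gradient estimate via mean value plus Fubini to reduce to an $L^p$ doubling on concentric balls), but the way you obtain the doubling is genuinely different. The paper imports the sphere doubling $\|P_N\|_{L^p(\partial B(0,(1+1/N)r))}\lesssim\|P_N\|_{L^p(\partial B(0,r))}$ directly from \cite[Lemma 4.2]{M} and integrates in $r$; you instead derive it from an $L^p$ Bernstein inequality on $S^{d-1}$ for the degree-multiplier $D: \sum Y_k\mapsto\sum kY_k$, combined with a Gronwall argument for $g(s)=\|P_N(s\cdot)\|_{L^p(S^{d-1})}$. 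Your observations that the radial derivative of a \emph{harmonic} polynomial restricted to a sphere is exactly $s^{-1}D$ applied to the boundary restriction, and that this is precisely where harmonicity enters (for a general polynomial the restriction is $\sum_m s^m Q_m(\omega)$ with $\deg Q_m\le m$ but $Q_m$ not a pure harmonic, so the multiplier trick fails), are correct and nicely clarify why the Chebyshev obstruction disappears. Your passage from the ODI to the ball doubling via the annular integrals $\int_{1-1/N}^1$ and $\int_1^{1+1/N}$ is also sound (and in fact the monotonicity of $g$ from subharmonicity of $|P_N|^p$ is not even needed, since Gronwall alone gives $g(s)\gtrsim g(1)$ on $[1-1/N,1]$). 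The one thing you should be more careful about is the status of the ingredient you invoke: the $L^p$ Bernstein inequality on $S^{d-1}$ for the degree operator $D$ (equivalently for $\sqrt{-\Delta_{S^{d-1}}+((d-2)/2)^2}-(d-2)/2$) is true but is not elementary for $p\neq 2$ --- it is a spherical multiplier result in the spirit of Ditzian and Dai--Xu, of roughly the same depth as the doubling lemma the paper cites from \cite{M}. So you have traded one nontrivial external input for another rather than given a more elementary proof; you should pin down a precise reference (e.g.\ Ditzian's fractional Bernstein inequality on the sphere, or Dai--Xu, \emph{Approximation Theory and Harmonic Analysis on Spheres and Balls}) rather than calling it ``classical.''
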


\begin{proof}
The inequality follows from Lemma 4.2 in \cite{M}, where it is proved that
\[
\|P_N\|_{L^p(\partial B(0, (1+\frac{1}{N})r))}\lesssim \|P_N\|_{L^p(\partial B(0,r))}.
\]
This easily implies
 \begin{equation}\label{eq:M}
 \|P_N\|_{L^p(B(0, (1+\frac{1}{N})r))}\lesssim \|P_N\|_{L^p(B(0,r))}.\end{equation}
   The Cauchy estimate and properties of subharmonic functions imply
 \[|\nabla P_N(x)|\lesssim \frac{N}{r}\strokedint_{B(x, \frac{r}{N})}|P_N|.\]
 Then, applying H\"older's inequality, we get
 \[\int_{B(0,r)}|\nabla P_N|^p\lesssim \frac{N^p}{r^p}\int_{B(0, (1+\frac{1}{N})r)}|P_N|^p.\]
 	Finally, \eqref{eq:M} gives the required inequality.
\end{proof}


We will now extend the Bernstein inequality to harmonic functions. The role of the degree of a polynomial will be played
by the frequency function, which we define as follows:
\begin{equation}\label{eq:Freq} 
N_h(x,r)=\frac{r\int_{B(x,r)}|\nabla h|^2}{2\int_{\partial B(x,r)}|h|^2}
\end{equation}
	for a function $h\in C(\overline{B(0,r)})$ harmonic in $B(x,r)$. Note that for a homogeneous harmonic polynomial $P$ we have $N_P(0,r)=N$. In general, $N_h(x,r)$ is a non-decreasing function of $r$ and 
	\begin{equation}\label{eq:double}
\strokedint_{B(0,2r)}|h|^2\le 2^{2N_h(0,2r)}\strokedint_{B(0,r)}|h|^2 
	\end{equation}
	for any function $h$ harmonic in $B(0,R)$ with $R>2r$; see for instance \cite[Corollary 1.5]{H} for a proof. We also note that the mean value property for harmonic functions implies that
	\begin{equation}\label{eq:ell}
\left(	\strokedint_{B(0,r)}|h|^2\right)^{1/2}\le \sup_{B(0,r)}|h|\lesssim \strokedint_{B(0,2r)}|h|.
	\end{equation}

	The main result of this section is the following Bernstein inequality for harmonic functions:
	\begin{theorem}\label{thmx}
		Let $h$ be a harmonic function in $B(0,R)\subset\R^d$ such that $N_h(0,\rho)\le N$ for any  $\rho<R$  and let $r<R/2$. Then  
		\begin{equation}
		\|\nabla h\|_{L^p(B(0,r))}\lesssim \frac{N}{r}\|h\|_{L^p(B(0,r))},
		\end{equation}
		for any $p\in[1,\infty]$.
	\end{theorem}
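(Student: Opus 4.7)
The plan is to mimic the proof of Lemma \ref{poly}, with the frequency bound $N$ playing the role of the polynomial degree. The argument reduces to two steps: a pointwise Cauchy-type gradient estimate and a sharp $L^p$-doubling inequality at scale $1+1/N$ for harmonic functions of bounded frequency; the latter is the essential point. For the gradient estimate, the Cauchy estimate gives $|\nabla h(x)|\lesssim s^{-1}\sup_{B(x,s)}|h|$ whenever $B(x,s)\subset B(0,R)$, and since $|h|^p$ is subharmonic for $p\ge 1$, local boundedness of subharmonic functions yields $\sup_{B(x,s)}|h|^p\lesssim \strokedint_{B(x,2s)}|h|^p$. Choosing $s\sim r/N$ and integrating via Fubini gives
\[\int_{B(0,r)}|\nabla h|^p\lesssim \Bigl(\frac{N}{r}\Bigr)^p\int_{B(0,(1+1/N)r)}|h|^p,\]
exactly as in the proof of Lemma \ref{poly}. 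The theorem thus reduces to the sharp $L^p$-doubling
\[\|h\|_{L^p(B(0,(1+1/N)r))}\lesssim \|h\|_{L^p(B(0,r))}. \qquad (\ast)\]

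For $p=2$, $(\ast)$ is immediate: iterating \eqref{eq:double} gives the integral form $\int_{B(0,r_2)}|h|^2\lesssim (r_2/r_1)^{CN}\int_{B(0,r_1)}|h|^2$, and the ratio $(1+1/N)^{CN}\le e^C$. For general $p$ I would bootstrap from the $L^2$-version. First I would establish an $L^\infty$-doubling $\sup_{B(0,(1+1/N)r)}|h|\lesssim \sup_{B(0,r)}|h|$ by chaining the subharmonic estimate $\sup_{B(0,r)}|h|^2\lesssim r^{-d}\|h\|_{L^2(B(0,2r))}^2$ with the $L^2$-doubling and closing up. Then I would interpolate to general $p$ using a Moser-type reverse Hölder inequality on a slightly smaller intermediate radius, namely $\sup_{B(0,(1-c/N)r)}|h|\lesssim N^{d/p}r^{-d/p}\|h\|_{L^p(B(0,r))}$ (applied to the subharmonic function $|h|^p$), and then re-expand to $r(1+1/N)$ via the $L^\infty$-doubling whose ratio between $r(1-c/N)$ and $r(1+1/N)$ remains an absolute constant.

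The main obstacle will be to control the polynomial-in-$N$ losses produced by the Moser reverse Hölder step so that they are fully absorbed by the sharp $L^2$-doubling at scale $1/N$. This is plausible precisely because the $L^2$-doubling ratio $(1+1/N)^{CN}$ already saturates at $e^C$, providing exponential headroom capable of swallowing polynomial-in-$N$ errors once all intermediate radii are chosen with $O(1/N)$ spacing. Once $(\ast)$ is secured for every $p\in[1,\infty]$, combining it with the Cauchy-type gradient estimate above yields the theorem.
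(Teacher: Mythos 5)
Your reduction to the $L^p$-doubling $(\ast)$ is correct and matches the paper's first step, and the $p=2$ case is indeed immediate from iterating \eqref{eq:double}. The gap is in the bootstrap from $p=2$ to general $p$, and I do not think it closes.

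First, the $L^\infty$-doubling $\sup_{B(0,(1+1/N)r)}|h|\lesssim \sup_{B(0,r)}|h|$ is not obtainable by the chaining you describe. Taking $x$ with $|x|=(1+1/N)r$, the mean-value/subharmonic estimate on a ball of radius $\sim r/N$ followed by the $L^2$-doubling gives
\begin{equation*}
|h(x)|\lesssim \Bigl(\frac{N}{r}\Bigr)^{d/2}\|h\|_{L^2(B(0,(1+2/N)r))}\lesssim \Bigl(\frac{N}{r}\Bigr)^{d/2}\|h\|_{L^2(B(0,r))}\lesssim N^{d/2}\sup_{B(0,r)}|h|,
\end{equation*}
so "closing up" leaves a $N^{d/2}$ loss and you have not explained how to remove it. Second, even granting the $L^\infty$-doubling, inserting the Moser reverse H\"older $\sup_{B(0,(1-c/N)r)}|h|\lesssim N^{d/p}r^{-d/p}\|h\|_{L^p(B(0,r))}$ into the annulus $B(0,(1+1/N)r)\setminus B(0,(1-c/N)r)$ of volume $\sim r^d/N$ yields only $\|h\|_{L^p(B(0,(1+1/N)r))}\lesssim N^{(d-1)/p}\|h\|_{L^p(B(0,r))}$, again a polynomial loss for $d\ge 2$. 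Third, and most importantly, your claim that the $L^2$-doubling provides "exponential headroom" is mistaken: the doubling factor $(1+1/N)^{CN}\to e^C$ is a \emph{bounded constant}, not a factor smaller than one. There is nothing exponentially small anywhere in the Moser/$L^2$-doubling chain, so nothing can absorb a polynomial-in-$N$ error; indeed, the gap between the Moser exponent $d/p$ and the sharp exponent for harmonic functions with frequency $N$ is itself a polynomial power of $N$, and Moser iteration, which only sees subharmonicity, cannot recover it.

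The paper sidesteps all of this with a different key lemma (Lemma \ref{l:pol}): using the spherical-harmonic expansion and the frequency bound, $h$ is approximated in $L^\infty(B(0,(1+1/N)r))$ by a harmonic polynomial $P$ of degree at most $5N$ with error bounded by $q^N\strokedint_{B(0,r)}|h|$ for some $q<1$ --- genuinely exponentially small. The exponential decay is what permits absorption across all $p$; the sharp $L^p$-doubling for $h$ then follows from the polynomial $L^p$-doubling \eqref{eq:M} applied to $P$ plus the triangle inequality. This polynomial-approximation step is the missing ingredient in your proposal; without it, the polynomial-in-$N$ losses cannot be removed.
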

	
	The derivation of Theorem \ref{thmx} from Lemma \ref{poly} can also be found in \cite{HL94}. To prove the theorem we approximate a harmonic function with frequency bounded by $N$ by a harmonic polynomial of degree $5N$. 
	
	\begin{lemma} \label{l:pol}
	Suppose that $a>0$ is a fixed constant.  There exists $N_0=N_0(a,d)$ such that for any  function $h\in C(\overline{B(0,2r)})$ harmonic   in $B(0,2r)$ which satisfies $N_h(0,2r)\le N$ for  some $N>N_0$, there is a harmonic polynomial $P$ of degree at most $5N$ such that
	\[\sup_{B(0,(1+\frac{1}{N})r)}|h-P|\le a\strokedint_{B(0,r)}|h|.\]
	\end{lemma}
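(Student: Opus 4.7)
The plan is to approximate $h$ by the truncation of its homogeneous harmonic polynomial expansion and then control the tail via the frequency bound. I would write $h=\sum_{k\ge 0}h_k$ in $B(0,2r)$, where each $h_k$ is a homogeneous harmonic polynomial of degree $k$, and take the candidate polynomial to be $P\defeq\sum_{k=0}^{5N}h_k$. The problem then reduces to showing that $\sup_{B(0,(1+1/N)r)}\bigl|\sum_{k>5N}h_k\bigr|\le a\strokedint_{B(0,r)}|h|$ for $N$ large enough depending on $a$ and $d$.

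The termwise bound combines homogeneity $\sup_{B(0,s)}|h_k|=s^k\sup_{S^{d-1}}|h_k|$ with the classical $L^\infty$-$L^2$ estimate $\sup_{S^{d-1}}|h_k|\lesssim k^{(d-2)/2}\|h_k\|_{L^2(S^{d-1})}$ for spherical harmonics of degree $k$. Set $c_k\defeq\|h_k\|_{L^2(S^{d-1})}^2$ and $b_k\defeq r^{2k}c_k$. Orthogonality on spheres yields $\int_{\partial B(0,\rho)}|h|^2=\rho^{d-1}\sum_k\rho^{2k}c_k$, and combined with Green's identity this gives the standard differential identity $\frac{d}{d\rho}\log\int_{\partial B(0,\rho)}|h|^2=\frac{d-1+2N_h(0,\rho)}{\rho}$, whose integration from $r$ to $2r$ under the hypothesis $N_h\le N$ produces $\int_{\partial B(0,2r)}|h|^2\le 2^{d-1+2N}\int_{\partial B(0,r)}|h|^2$. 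Rearranged, this is exactly $\sum_k 4^k b_k\le 4^N B$ with $B\defeq\sum_k b_k$, so that $b_k\le 4^{N-k}B$ for every $k$ -- the spherical harmonic coefficients decay geometrically past $k\sim N$.

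Inserting these estimates and summing the resulting geometric series with ratio $(1+1/N)/2$ should give
\begin{align*}
\sum_{k>5N}\sup_{B(0,(1+1/N)r)}|h_k|\;\lesssim\; B^{1/2}\sum_{k>5N}k^{(d-2)/2}(1+1/N)^k 2^{N-k}\;\lesssim\; B^{1/2} N^{(d-2)/2} 2^{-4N},
\end{align*}
using $(1+1/N)^{5N}\le e^5$. The remaining step is to replace $B^{1/2}\sim r^{-(d-1)/2}\|h\|_{L^2(\partial B(0,r))}$ by $\strokedint_{B(0,r)}|h|$. Monotonicity of $\rho\mapsto\int_{\partial B(0,\rho)}|h|^2$ together with sphere doubling gives $B\lesssim 4^N\strokedint_{B(0,r)}|h|^2$; a standard self-improvement of the $L^2$ doubling \eqref{eq:double}, combined with the elliptic estimate \eqref{eq:ell}, upgrades \eqref{eq:double} to $L^1$ doubling and in turn to a reverse H\"older bound $(\strokedint_{B(0,r)}|h|^2)^{1/2}\lesssim 4^N\strokedint_{B(0,r)}|h|$. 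Altogether $B^{1/2}\lesssim 2^{3N}\strokedint_{B(0,r)}|h|$, and plugging back yields $\sup_{B(0,(1+1/N)r)}|h-P|\lesssim N^{(d-2)/2}\, 2^{-N}\,\strokedint_{B(0,r)}|h|$, which is $\le a\strokedint_{B(0,r)}|h|$ once $N\ge N_0(a,d)$.

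The main difficulty is arithmetic: both the frequency-driven decay of the coefficients and the conversion between $L^2$ and $L^1$ norms contribute competing exponential factors in $N$, and the argument closes only because the decay $2^{-4N}$ from truncating at degree $5N$ beats the aggregate growth $2^{3N}$ from the norm comparison. This margin is precisely what forces truncating at a multiple of $N$ rather than at $N$ itself; any $KN$ with $K$ strictly larger than the $L^2$-to-$L^1$ exponent (here $3$) works, and $K=5$ leaves comfortable slack.
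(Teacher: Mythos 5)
Your proposal is correct and uses the same core strategy as the paper: expand $h$ in spherical harmonics, take $P$ to be the degree-$5N$ truncation, and use the frequency bound together with the doubling and local elliptic estimates \eqref{eq:double}, \eqref{eq:ell} to show the tail is exponentially small in $N$. The only cosmetic difference is in how the tail coefficients are controlled — you extract a termwise bound $b_k\le 4^{N-k}B$ from sphere doubling and sum the resulting geometric series directly, while the paper bounds $\sum_{k>5N}a_k^2(2r)^{2k}$ by a Chebyshev-type argument on the frequency formula and then applies Cauchy--Schwarz — but the arithmetic closes with the same margin and the conclusion is identical.
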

	
	\begin{proof} For $x\in B(0,2r)$ we write $x=\xi\rho$, where $\rho=|x|$.
	We can decompose $h$ into spherical harmonics,
	\[h(x)=\sum_{k=0}^\infty a_kY_k(\xi)\rho^k,\]
	where $\strokedint_{\partial B(0,1)}Y_jY_k=\delta_{jk}$.
	Note that
	$\strokedint _{\partial B(0,\rho)} h^2=\sum_k a_k^2\rho^{2k}$ and

	\[N_h(0,2r)=\frac{\sum_{k=1}^\infty ka_k^2(2r)^{2k}}{\sum_{k=0}^\infty a_k^2(2r)^{2k}}.\]
We also have
\begin{equation}\label{eq:Nball}
\strokedint_{B(0,r)}|h|^2=\sum_{k=0}^\infty\frac{d}{2k+d}|a_k|^2r^{2k}.
\end{equation}
Consider the decomposition
\[h(x)=P(x)+Q(x)=\sum_{k=0}^{5N} a_kY_k(\xi)\rho^k+\sum_{k=5N+1}^\infty a_kY_k(\xi)\rho^k.\]
The frequency bound $N_h(0,2r)\le N$ implies that
\[\sum_k ka_k^2(2r)^{2k}\le N\sum_k a_k^2(2r)^{2k}.\]
Therefore we have:
\[5N\sum_{k=5N+1}^\infty a_k^2(2r)^{2k}\le \sum_{k=5N+1}^\infty ka_k^2(2r)^{2k}\le N\sum_{k=0}^{5N} a_k^2(2r)^{2k}+N\sum_{k=5N+1}^\infty a_k^2(2r)^{2k}
.\]	
The above inequality and \eqref{eq:Nball}  give
	\begin{multline*}
	\sum_{k=5N+1}^\infty a_k^2(2r)^{2k}\le \frac{1}{4}\sum_{k=0}^{5N} a_k^2(2r)^{2k}\le \frac{10N+d}{4d}\sum_{k=0}^\infty \frac{d}{2k+d}a_k^2(2r)^{2k}\\\le \frac{10N+d}{4d}
	\strokedint_{B(0,2r)}|h|^2.
\end{multline*}
	Now, applying \eqref{eq:double} twice and then \eqref{eq:ell} for $B(0,r/2)$, we get
	\begin{equation}\label{eq:MainL}
	\sum_{k=5N+1}^\infty a_k^2(2r)^{2k}\le C 2^{4N}N\left(\strokedint_{B(0,r)}|h|\right)^2,
	\end{equation}
	when $N>N_0$.

Our aim is to estimate  the norm of $Q=h-P$.	 Let us first remind the reader that we have the following pointwise bounds for ($L^2$
normalized) spherical harmonics: 
\[|Y_k(\xi)|\le Ck^{\frac{d-2}{2}}.\]
This would follow from general results of H{\"o}rmander on $L^{\infty}$ bounds for eigenfunctions, but since for spherical harmonics the proof is much simpler we briefly outline the argument. Let $\mathcal{H}_k$ denote the space of spherical harmonics of degree $k$, i.e., corresponding to eigenvalue $k(k+d-2)$, and let $Z_k(\xi,\eta)$ denote the reproducing kernel for $\mathcal{H}_k$, also called the zonal spherical harmonic. Then, denoting by angular brackets the $L^2$ pairing on $S^{d-1}$, we have 
\begin{align*}
    |Y_k(\xi)|=|\langle Y_k(\eta), Z_k(\xi,\eta)\rangle|\leq \|Y_k\|_2\|Z_k(\xi,\cdot)\|_2\leq \sqrt{dim(\mathcal{H}_k)}\leq Ck^{\frac{d-2}{2}},
\end{align*}
where for the second and third inequalities we refer to \cite[Chapter 5]{ABR}.

	Let $b=1+\frac{1}{N}$. From the estimates on spherical harmonics it follows that, for  $x\in B(0,br)$,
	\[|Q(x)|\lesssim \sum_{k=5N+1}^\infty |a_k|k^{\frac{d-2}{2}}(br)^k.\]
	Applying the Cauchy inequality and \eqref{eq:MainL}, we get
\begin{multline*}
|Q(x)|\lesssim \left(\sum_{5N+1}^\infty a_k^2(2r)^{2k}\right)^{1/2}\left(\sum_{5N+1}^\infty k^{d-2}\left(\frac{b}{2}\right)^{2k}\right)^{1/2}\\ \lesssim N^{\frac12} 2^{2N}\left(\sum_{5N+1}^\infty k^{d-2}\left(\frac{b}{2}\right)^{2k}\right)^{1/2}\strokedint_{ B(0,r)}|h|.\end{multline*}
Comparing the series $\sum k^{d-2} (b/2)^{2k}$ to a geometric series, we conclude that, for $N>N_0$,
\[\sum_{k=5N+1}^\infty k^{d-2}\left(\frac{b}{2}\right)^{2k}\le (5N)^{d-2} 2^{-5N}.\] This implies 
\[|Q(x)|\lesssim N^{\frac{d-1}{2}}2^{-\frac{N}{2}}\strokedint_{B(0,r)}|h|.\]
We complete the proof of the lemma by choosing $N_0=N_0(a,d)$ large enough.
\end{proof}

\begin{remark}
The proof implies the following statement. There is $q<1$ such that if $u$ is a harmonic function and $N_u(0,2r)\le N$, $N\ge 2$, then there is a harmonic polynomial $P$ of degree not exceeding $5N$ such that
\[\sup_{B(0,(1+\frac{1}{N})r)}|h-P|\le Cq^N\strokedint_{B(0,r)}|h|.\]
The celebrated Bernstein-Walsh theorem implies that harmonic functions in a domain $\Omega$ can be approximated  uniformly by polynomials on a compact subset of $\Omega$ and the error of approximation decay exponentially, see for example \cite{BL}. Lemma \ref{l:pol} can be also deduced from this result and inequalities \eqref{eq:double}  and \eqref{eq:ell}.
\end{remark}
\begin{proof}[Proof of Theorem \ref{thmx}]
As in the proof of Lemma \ref{poly} it suffices to show that 
	\[\|h\|_{L^p(B(0, (1+\frac{1}{N})r))}\le C\|h\|_{L^p(B(0,r))}.\]
	
Let $b=1+\frac{1}{N}$. We apply the above lemma with $a=b^{-d}/2$. It gives $h=P+Q$, where $P$ is a harmonic polynomial of degree $d\le\max\{5N,5N_0\}$ and $Q$ satisfies
\[\sup_{B(0,br)}|Q|\le \frac{1}{2b^d}\strokedint_{B(0,r)}|h|.\]
Then, for any $p\in[1,\infty]$, by H\"older's inequality, we have
 \[\|Q\|_{L^p(B(0,br))}\le \frac{1}{2}\|h\|_{L^p(B(0,r))}\le \frac{1}{2}\|h\|_{L^p(B(0,br))}.\]
Finally,  the triangle inequality and \eqref{eq:M} imply the required estimate:
\begin{multline*}\|h\|_{L^p(B(0,br))}\le 2\|P\|_{L^p(B(0,br))}\le CN\|P\|_{L^p(B(0,r))}
\le 2CN\|h\|_{L^p(B(0,r))}.
\end{multline*}

\end{proof}

\section*{Acknowledgements}
Part of the research for this article was conducted while the first named author was a Visiting Student Researcher at the Department of Mathematics of Stanford University; it is a pleasure to thank the department for the hospitality. \\
This work was concluded when both authors were guests of the Hausdorff Research Institute for Mathematics in Bonn during the trimester program "Interactions between Geometric measure theory, Singular integrals, and PDE"; we thank the Institute and the organizers of the trimester program for providing excellent working conditions and a stimulating environment. \\
We also wish to thank Joaquim Ortega-Cerd{\`a} for pointing out the reference \cite{M} to us, and Jiuyi Zhu for the reference \cite{HL94}.\\
Both authors are supported by Project 275113 of the Research Council of Norway. The second named author is supported by NSF grant DMS-1956294.

\bibliographystyle{plain}
\bibliography{references}
\end{document}